\documentclass[11pt]{article}
\usepackage{bbm}
\usepackage{authblk}
\usepackage[a4paper, margin=1in]{geometry}
\usepackage{amsmath, amssymb, amsthm, stmaryrd, bbold,mathtools, mathrsfs}
\usepackage{extarrows}
\usepackage{graphicx}
\usepackage{tikz, pgfplots,tikz-cd}
\usetikzlibrary{calc, decorations.markings, decorations.pathmorphing,shapes}
\usepackage{diagbox}
\usepackage{float}
\usepackage{multirow}
\usepackage{tabularray}
\usepackage{enumitem}
\usepackage{setspace}
\usepackage{ragged2e}
\usepackage[colorlinks, citecolor=YaleBlue, linkcolor=YaleBlue, urlcolor=YaleBlue, breaklinks=true]{hyperref}
\usepackage{enumitem,xcolor}
\usepackage[acronym]{glossaries}
\usepackage{mdframed}
\usepackage{lineno}
\allowdisplaybreaks[4]
\pgfplotsset{compat=1.18}
\usepackage{makecell}
\usepackage{setspace}
\usepackage{bibentry}
\usepackage{hyperref}
\nobibliography*

\renewcommand{\footnoterule}{%
  \kern-3pt
  \hrule width 2in height 0.4pt
  \kern 0pt
}
\usepackage{epigraph}
\definecolor{YaleBlue} {HTML}{00356b}

\definecolor{YaleMidBlue} {HTML}{286dc0}
\definecolor{YaleLightBlue} {HTML}{e5f0fd}
\definecolor{YaleMidGrey} {HTML}{b2b2b2}
\definecolor{YaleLightGrey} {HTML}{fcfcfc}

\definecolor{ao}{rgb}{117, 15, 109}

\definecolor{YaleBlack} {HTML}{222222}
\definecolor{YaleDarkGrey} {HTML}{4a4a4a}
\definecolor{YaleWhite} {HTML}{f9f9f9}
\definecolor{YaleLightGreen}{HTML}{AFB896}
\definecolor{YaleGreen} {HTML}{5f712d}
\definecolor{YaleOrange} {HTML}{bd5319}

\newcommand{\spd}[1]{(#1+1)d}

\theoremstyle{definition}
\newtheorem{theorem}{Theorem}[section]
\newtheorem{corollary}[theorem]{Corollary}
\newtheorem{prop}[theorem]{Proposition}

\theoremstyle{remark}

\newtheorem{example}{Example}

\newcommand{\Z}{\mathbb{Z}}

\newcommand{\C}{\mathbb{C}}

\newcommand{\cC}{ {\cal C} } 
\newcommand{\cD}{ {\cal D} }

\newcommand{\cH}{ {\cal H} }

\newcommand{\cM}{ {\cal M} }

\newcommand{\cX}{ {\cal X} } 
 
\newcommand{\cZ}{ {\cal Z} }

\newcommand{\sD}{ {\mathsf D} }

\newcommand{\sH}{ {\mathsf H} }

\newcommand{\Hilb}{\mathsf{Hilb}}

\newcommand{\one}{\mathbf{1}}

\newcommand{\DHR}{\mathrm{DHR}}

\newcommand{\Aut}{\mathrm{Aut}}
\newcommand{\Fun}{\mathsf{Fun}}

\newcommand{\Rep}{\mathsf{Rep}}

\newcommand{\id}{\mathrm{id}}

\newcommand{\TY}{\mathrm{TY}}

\newcommand{\ot}[1][]{\underset{#1}{\otimes}}

\newcommand{\fgt}{\mathrm{fgt}}

\tikzstyle string=[thin,postaction={decorate},decoration={markings,
    mark=at position 0.5*\pgfdecoratedpathlength+.5*4pt with {\arrow[scale=1]{stealth}}}]
    \tikzstyle revstring=[thin,postaction={decorate},decoration={markings,
    mark=at position 0.5*\pgfdecoratedpathlength+.5*4pt with {\arrowreversed[scale=1]{stealth}}}]
\newcommand{\diagram}[2]{
\begin{tikzpicture}[baseline=(current bounding box),scale=#1]
#2
\end{tikzpicture}
}

\tikzstyle trivial=[dashed, thick]

\tikzset{
  symbol/.style={
    draw=none,
    every to/.append style={
      edge node={node [sloped, allow upside down, auto=false]{$#1$}}}
  }
}

\newtheorem{thm}{Theorem}[section]

\newcommand{\googlebooks}[1]{(preview at \href{https://books.google.com/books?id=#1}{google books})}

\newcommand{\numdam}[1]{}

\theoremstyle{remark}

\theoremstyle{definition}

\newtheorem{defn}[thm]{Definition}
\newtheorem{quest}[thm]{Question}

\DeclareMathOperator{\End}{End}

\usepackage{graphicx,tikz}
\usetikzlibrary{arrows,backgrounds,scopes}
\usetikzlibrary{calc,cd}
\newcommand{\tikzmath}[2][0.42]
{\vcenter{\hbox{\begin{tikzpicture}[scale=#1] #2\end{tikzpicture}}}
}
\tikzset{coupon/.style={rectangle,rounded corners=1.5pt,draw,fill=white,inner sep=1.5,minimum size=12pt}}
\newcommand{\mydotw}[1]{\begin{scope}[shift={#1}] \fill[shift only,white] (0,0) circle (1.5pt); \draw[shift only,thick]  (0,0) circle (1.5pt);   \end{scope}}

\title{On the structure of categorical duality operators }

\author{Corey Jones\thanks{Department of Mathematics, North Carolina State University, Raleigh, NC 27695, USA}\,, Xinping Yang\thanks{Perimeter Institute for Theoretical Physics, Waterloo, ON N2L 2Y5, Canada}}

\date{}

\begin{document}

\maketitle

\begin{abstract}
We systematically study categorical duality operators on spin (and anyon) chains with respect to an internal fusion category symmetry $\mathcal{C}$. We parameterize duality operators on the quasi-local algebra in terms of data dependent on the associated quantum cellular automata (QCA) on the symmetric subalgebra $B$. In particular, a QCA $\alpha$ on $B$ defines an invertible $\mathcal{C}$-$\mathcal{C}$ bimodule category $\mathcal{M}_{\alpha}$, and the duality operators extending $\alpha$ form a simplex, with extreme points in bijective correspondence with the simple object of $\mathcal{M}_{\alpha}$. Then we consider the structure of external symmetries generated by a family of duality operators, and show that if the UV models are all defined on tensor product Hilbert spaces, these categories necessarily flow to weakly integral fusion categories in the IR.
\end{abstract}

\tableofcontents

\section{Introduction}

Of recent interest are generalized symmetries of spin chains \cite{seifnashri2024clusterstatenoninvertiblesymmetry,meng2025noninvertiblesptsonsiterealization,Inamura_2022, MR4861493,PhysRevB.111.054432,bhardwaj2025latticemodelsphasestransitions}. Fusion category symmetries, describing finite internal symmetries of quantum field theories in \spd{1}, are now fairly well-understood from a structural viewpoint using the framework of SymTFT \cite{Kong_2017,freed2024topologicalsymmetryquantumfield}, which has been recently developed on the lattice \cite{PhysRevB.111.054432,bhardwaj2025latticemodelsphasestransitions,evans2025operatoralgebraicapproachfusion}. In the operator algebraic framework for the infinite volume limit, SymTFT decompositions can be completely characterized by a the symmetric subalgebra $B$ (or in the SymTFT context, the physical boundary subalgebra) of the algebra $A$ of all quasi-local operators, and their DHR bimodules \cite{jones2024dhrbimodulesquasilocalalgebras,hataishi2025structuredhrbimodulesabstract}. This has led to formal interpretations of the categorical Landau paradigm \cite{bhardwaj2023categoricallandauparadigmgapped}, as well as versions of several anomaly-enforced gaplessness results \cite{Bhardwaj_2025,evans2025operatoralgebraicapproachfusion}.

In the context of fusion category symmetry, there is a very interesting phenomenon generalizing the famous Kramers-Wannier duality called \textit{categorical duality} \cite{cuiper2025gaugingdualityonedimensionalquantum,lu2026generalizedkramerswannierselfdualityhopfising,inamura2026remarksnoninvertiblesymmetriestensor,Lootens_2023,Lootens_2024}. A categorical duality operator is a (non-local), locality preserving operator\footnote{in the infinite volume limit, non-local operators can be characterized as arbitrary completely positive maps on the quasi-local algebra.} which is not necessarily unitary but restricts to a unitary on the symmetric sector. A duality operator can thus exchange inequivalent symmetric gapped phases. In the case of Kramers-Wannier, the underlying symmetry is the on-site $\mathbbm{Z}_{2}$ spin flip symmetry, and the Kramers-Wannier operator exchanges the trivial SPT with the symmetry breaking phase. This allows us in principle to systematically access non-trivial symmetric gapped phases from simpler ones. Thus an understanding of categorical dualities, and in particular classifying them up to symmetric finite-depth circuits, is a very interesting problem.

Any duality operator defines a quantum cellular automata (QCA) on the algebra $B$. Using the theory of DHR bimodules (outlined below), we prove the following theorem, which reduces the study of duality channels to QCA on the symmetric subalgebra $B$, which in turn have been extensively studied \cite{jones2024dhrbimodulesquasilocalalgebras, jones2025quantumcellularautomatacategorical, ma2024quantumcellularautomatasymmetric}.

\begin{theorem}\label{Theorem1} Suppose we have a fusion category $\mathcal{C}$ acting on the lattice, with symmetric subalgebra $B\subseteq A$. 

\begin{enumerate}
 \item 
Associated to $\alpha\in \text{QCA}(B)$ is an invertible $\mathcal{C}$-$\mathcal{C}$ bimodule category $\mathcal{M}_{\alpha}$.
\item
The set of (unital) duality operators which restrict to $\alpha$ on $B$ form a simplex, and the extreme points are in bijective correspondence with the simple objects of $\mathcal{M}_{\alpha}$.
\end{enumerate}
\end{theorem}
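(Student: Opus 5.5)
The proof will go through the DHR bimodule framework cited in the introduction. The input is the standard fact from the SymTFT/DHR analysis: for a fusion category $\mathcal{C}$ acting on the lattice with symmetric subalgebra $B\subseteq A$, the category $\DHR(B)$ is braided equivalent to $\cZ(\mathcal{C})$. The inclusion $B\subseteq A$ is encoded by a Lagrangian algebra $L_A\in\DHR(B)$, namely $A$ regarded as a $B$-$B$ bimodule. Under this identification, $\mathcal{C}$ is recovered as the category of $L_A$-modules in $\DHR(B)\simeq\cZ(\mathcal{C})$.

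\emph{Part 1.} A QCA $\alpha\in \mathrm{QCA}(B)$ acts on $\DHR(B)$ by pulling back (twisting) bimodules along $\alpha$. This gives a braided autoequivalence $\alpha_*\in\Aut_{br}(\cZ(\mathcal{C}))$; that locality preservation suffices for this is a result of the cited QCA papers. Pushing $L_A$ forward gives a second Lagrangian algebra $\alpha_*(L_A)$. By the correspondence of Etingof--Nikshych--Ostrik / Davydov--Nikshych--Ostrik between braided autoequivalences of $\cZ(\mathcal{C})$ and invertible $\mathcal{C}$-$\mathcal{C}$ bimodule categories, we set
\[
\mathcal{M}_\alpha := \text{the category of } (L_A,\alpha_*(L_A))\text{-bimodules in } \cZ(\mathcal{C}),
\]
equivalently the category of $\alpha_*(L_A)$-modules in $\mathcal{C}$ after transport. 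This is an invertible $\mathcal{C}$-$\mathcal{C}$ bimodule category, and its image under the ENO isomorphism $\mathrm{BrPic}(\mathcal{C})\cong\Aut_{br}(\cZ(\mathcal{C}))$ is $\alpha_*$. I would also check that $\mathcal{M}_\alpha$ depends on $\alpha$ only through $\alpha_*$.

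\emph{Part 2.} A unital duality operator extending $\alpha$ is a ucp map $\Phi:A\to A$ with $\Phi|_B=\alpha$. Two consequences follow.
\begin{itemize}
\item Since $\alpha$ is a $*$-automorphism of $B$, the multiplicative domain argument shows $\Phi$ is $B$-bimodular in the twisted sense: $\Phi(b_1 x b_2)=\alpha(b_1)\Phi(x)\alpha(b_2)$.
\item Precomposing with $\alpha^{-1}$ on the target turns $\Phi$ into a ucp $B$-$B$ bimodular map $A\to {}_{\alpha}A_{\alpha}$, where ${}_{\alpha}A_{\alpha}$ is $A$ with its bimodule action twisted by $\alpha$. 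The latter object is $\alpha_*(L_A)$ in $\DHR(B)$.
\end{itemize}
Such bimodular maps correspond to morphisms $L_A\to\alpha_*(L_A)$ in $\DHR(B)$; this uses the result from the DHR-bimodule papers that bounded bimodular maps between DHR bimodules are intertwiners in the category. Unitality fixes the normalization on $\one\subseteq L_A$, and complete positivity corresponds to positivity of the associated element in the finite-dimensional C$^*$-algebra $\mathrm{Hom}(L_A,\alpha_*(L_A))$, transported by the standard Frobenius/Fourier isomorphism $\mathrm{Hom}(L_A,L')\cong \mathrm{Hom}(\one, L_A^{*}\otimes L')$. Concretely, I would show:
\begin{itemize}
\item $\mathrm{Hom}_{\cZ(\mathcal{C})}(L_A,\alpha_*(L_A))\cong\bigoplus_{m\in\mathrm{Irr}(\mathcal{M}_\alpha)}\C$ as an algebra under the convolution product induced by the algebra structures. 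This is the analogue of the fact that $\mathrm{Hom}(L,L)$ is commutative with minimal projections indexed by simple $L$-$L$ bimodules.
\item Under this isomorphism, ucp maps become the nonnegative vectors $(\lambda_m)$ with one linear normalization $\sum_m \lambda_m d_m c=1$ coming from unitality.
\end{itemize}
That set is a simplex whose extreme points are the normalized minimal projections, one for each simple $m$.

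The main obstacle is this last identification: showing that positivity of $\Phi$ corresponds exactly to entrywise positivity in the minimal-projection basis indexed by $\mathrm{Irr}(\mathcal{M}_\alpha)$. My first attempt would reduce to the case $\alpha=\id$, using that $\alpha_*(L_A)$ is Lagrangian and so gives an algebra isomorphic to the $Q$-system of $\mathcal{M}_\alpha$. I would then use the known result that bimodular ucp maps on a finite-index (connected) $Q$-system inclusion form a simplex indexed by irreducible subbimodules. These are the cp multipliers of Popa/Bischoff et al., which are extremal exactly on minimal central projections.
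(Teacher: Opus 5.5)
\textbf{Verdict: genuine gap.} Your overall route is the paper's route. You reinterpret a duality operator over $\alpha$ as a $B$-bimodular cp map between $L_A$ and its twist; this is the paper's convex isomorphism $\text{Ch}^{\alpha}_{B}(A_1,A_2)\cong \text{Ch}_B(\widetilde{\alpha}(A_1),A_2)$. (You cannot literally ``precompose with $\alpha^{-1}$'', since $\alpha^{-1}$ is only defined on $B$; but keeping the same map and twisting the target bimodule structure is fine.) You then characterize complete positivity as positivity in the commutative convolution algebra via cp-multipliers, and read off a simplex of normalized minimal projections.

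The gap is in identifying the index set with $\mathrm{Irr}(\mathcal{M}_\alpha)$. The category of \emph{all} $(L_A,\alpha_*(L_A))$-bimodules in $\mathcal{Z}(\mathcal{C})$ is not the invertible bimodule category attached to $\alpha_*$.
\begin{itemize}
\item Already for $\alpha=\mathrm{id}$ it is the category of $L_A$-$L_A$ bimodules, i.e.\ $\mathrm{Fun}_{\mathcal{Z}(\mathcal{C})}(\mathcal{C},\mathcal{C})\simeq\mathcal{C}\boxtimes\mathcal{C}^{\mathrm{mop}}$, of rank $(\mathrm{rank}\,\mathcal{C})^2$.
\item As a one-sided $\mathcal{C}$-module it is $\mathrm{rank}\,\mathcal{C}$ copies of $\mathcal{C}$. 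This is because the transported algebra $L_A\otimes L_A\cong L_A^{\oplus\mathrm{rank}\,\mathcal{C}}$ is not connected, so your ``$\alpha_*(L_A)$-modules in $\mathcal{C}$'' has the same defect.
\item By contrast, $\dim\mathrm{Hom}(L_A,L_A)=\mathrm{rank}\,\mathcal{C}$, and the correct invertible bimodule for $\alpha=\mathrm{id}$ is $\mathcal{C}$ itself.
\item Concretely, in the toric code with $L=1\oplus e$ there are four simple $L$-$L$ bimodules, but the convolution algebra is two-dimensional.
\end{itemize}
So your claimed isomorphism $\mathrm{Hom}(L_A,\alpha_*(L_A))\cong\bigoplus_{m\in\mathrm{Irr}(\mathcal{M}_\alpha)}\mathbb{C}$ is false for your $\mathcal{M}_\alpha$, and with this definition Part 1 fails too, since the category is not invertible. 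The ``fact'' you model it on is also misstated. Minimal convolution projections in $\mathrm{Hom}(L,L)$ correspond to the simple summands of the free bimodule $L\otimes L$, equivalently to $\mathrm{Irr}(\mathcal{C})$, not to all simple $L$-$L$ bimodules.

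The missing ingredient is what the paper imports from Bischoff--Jones. Take $\mathcal{M}_\alpha$ to be the ENO bimodule category. As a category this is $\mathrm{Fun}_{\mathcal{C}}(\mathcal{M}_{L_A},\mathcal{M}_{\alpha_*(L_A)})$, which equals $\mathcal{M}_{\alpha_*(L_A)}$ since $\mathcal{M}_{L_A}=\mathcal{C}$. It is modules over a \emph{connected} algebra in $\mathcal{C}$, not bimodules between Lagrangian algebras in $\mathcal{Z}(\mathcal{C})$. Then exhibit the explicit elements $\Phi_Y$, for $Y$ simple in this category, as a basis of $\mathrm{Hom}_{\mathcal{Z}(\mathcal{C})}(L_A,\alpha_*(L_A))$ satisfying $\Phi_Y*\Phi_Z\propto\delta_{Y,Z}\Phi_Y$. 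Only then do the extreme points match $\mathrm{Irr}(\mathcal{M}_\alpha)$.

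A secondary point: your proposed reduction to $\alpha=\mathrm{id}$ is not available if it means replacing $\alpha_*(L_A)$ by $L_A$. These are generally non-isomorphic algebras; Kramers--Wannier exchanges $1\oplus e$ and $1\oplus m$. You therefore need the two-algebra version of the cp-multiplier theorem, for maps between distinct Q-systems, which is what the paper uses.
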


\noindent In particular, any classification of QCA on $B$ up to symmetric circuits provides a classification of duality channels up to symmetric circuits.

Another major motivation for studying Kramers-Wannier type dualities is their role in \textit{extending} the paradigm for non-invertible symmetry from the SymTFT picture described above to something more general. This larger class of symmetries, motivated explicitly by the Kramers-Wannier example, can be realized by duality operators but the fusion rules of these operators might ``mix with translation". This gives rise to infinite fusion rules on the lattice, which nevertheless flow to finite fusion rules in the IR \cite{Seiberg_2024}. The class of symmetries realized this way has been termed ``emanant" \cite{Cheng_2023,Seiberg:2023cdc}. 

An interesting observation is the class of fusion categories that can arise emanantly in the IR is strictly larger than the class that can be realized directly in the UV. Indeed the Kramers-Wannier operator flows under RG to fusion category symmetry with Ising fusion rules, which has an object with quantum dimension $\sqrt{2}$. Non-integral fusion categories cannot be realized as internal symmetries on the tensor product Hilbert space lattice in the UV \cite{evans2025operatoralgebraicapproachfusion}. In fact, it has been conjectured that any of these generalized fusion categorical symmetries emerging in the IR must be weakly integral, namely that the squares of the dimensions of all simple objects are integers \cite{lu2026generalizedkramerswannierselfdualityhopfising,inamura2026remarksnoninvertiblesymmetriestensor}. Part of the motivation of this paper is to demonstrate this conjecture is true, under the assumption that the generalized symmetry operators on the lattice are duality operators with respect to a fixed internal symmetry, extending the setup of the Kramers-Wannier operator.

When viewing duality operators as symmetries in the UV, we will use the term \textit{external symmetries} to contrast with the initial internal symmetries constraining RG. Operationally, a duality operator is external if it acts by a non-trivial QCA on the boundary algebra. External symmetries are designed to include not only spatial symmetries but also our duality operators. However, it is not entirely clear to what extent external symmetries are described by an explicit tensor category, rather than just a fusion ring (or more precisely a hypergroup). 

One point that distinguished external symmetries from internal ones is that the UV fusion rules on the lattice do not, in general, match the IR fusion fusion rules, even when the category in question is anomaly-free (equipped with a fiber functor). Moreover, external symmetries described by duality operators cannot necessarily be consistently defined on any fixed finite region in a quantum spin chain. For example, the non-invertible symmetry in $\Rep^\dagger(D_8)$ SPTs can be realized as the $\Z_2 \times \Z_2$ duality operator on the cluster spin chain \cite{seifnashri2024clusterstatenoninvertiblesymmetry}. While the lattice symmetry operators satisfy the $\Rep^\dagger(D_8)$ fusion rules, the duality operators do not satisfy the zipping condition \cite{lu2025strangecorrelatorstringorder}, and thus do not naturally form a tensor category in the UV. When the IR fusion category $\cX$ is anomalous, the UV external symmetry structure necessarily digresses from $\cX$ even more, meaning that the fusion rules \textit{must} mix with translation.

To clarify this situation, we argue that any (internal or external) symmetries realized either in the UV or IR by duality operators must necessarily be a quotient of a universal tensor category, which we construct. As a corollary, we argue that any emanant fusion category symmetry emergin in the IR from a symmetry constrained RG flow of external symmetries on a tensor product Hilbert space lattice must be weakly integral.

\begin{theorem}\label{Theorem:UniversalCat}
Let $B\subseteq A$ be a physical boundary subalgebra with fusion category $\mathcal{C}$, and suppose $\Phi_{1}, \dots \Phi_{n}$ are duality operators generating an external symmetry. Then there is a canonical $\mathbb{F}_{n}$-graded extension of the category $\mathcal{C}$, denoted as $\mathcal{C}\#\mathbb{F}_{n}$.
\begin{enumerate}
\item 
The normalized fusion rules associated to $\{\Phi_{i}\}$ are a quotient of the fusion rules of the tensor category $\mathcal{C}\# \mathbb{F}_{n}$.
\item 
If the duality operators $\{\Phi_{i}\}$ flow to an internal symmetry in the IR described by a fusion category $\cX$, then $\cX$ is a quotient of $\mathcal{C}\# \mathbb{F}_{n}$.

\end{enumerate}

\end{theorem}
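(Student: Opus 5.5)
The plan is to build $\mathcal{C}\#\mathbb{F}_n$ directly from the invertible bimodule categories supplied by Theorem~\ref{Theorem1}, and then read off both claims from the fact that duality operators act on DHR bimodules. Each $\Phi_i$ restricts to a QCA $\alpha_i\in\mathrm{QCA}(B)$, and Theorem~\ref{Theorem1} gives an invertible $\mathcal{C}$-$\mathcal{C}$ bimodule category $\mathcal{M}_i:=\mathcal{M}_{\alpha_i}$; its inverse is $\mathcal{M}_{\alpha_i^{-1}}\simeq\mathcal{M}_i^{\mathrm{op}}$. Since $\mathbb{F}_n$ is free, a group homomorphism $\mathbb{F}_n\to\mathrm{BrPic}(\mathcal{C})$ is determined by the images of the generators. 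For a reduced word $w=g_{i_1}^{\epsilon_1}\cdots g_{i_k}^{\epsilon_k}$ I would set $\mathcal{C}_w:=\mathcal{M}_{i_1}^{\epsilon_1}\boxtimes_{\mathcal{C}}\cdots\boxtimes_{\mathcal{C}}\mathcal{M}_{i_k}^{\epsilon_k}$ and define $\mathcal{C}\#\mathbb{F}_n=\bigoplus_{w}\mathcal{C}_w$, a locally finite, infinitely graded tensor category with $\mathcal{C}_e=\mathcal{C}$.

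The monoidal structure is the main obstacle. Etingof--Nikshych--Ostrik realize graded extensions from a map to the Brauer--Picard $3$-group, and in general this has $H^3$ and $H^4$ obstructions. For a free group both groups vanish, because $\mathbb{F}_n$ has cohomological dimension $1$, and all choices are equivalent, because $H^2(\mathbb{F}_n,-)=0$. The extension therefore exists and is canonical, which accounts for the word ``canonical'' in the statement. Concretely, the tensor functors $\mathcal{C}_w\boxtimes\mathcal{C}_{w'}\to\mathcal{C}_{ww'}$ come from the relative Deligne product together with the fixed evaluation equivalences $\mathcal{M}_i\boxtimes_{\mathcal{C}}\mathcal{M}_i^{\mathrm{op}}\simeq\mathcal{C}$. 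I would check associativity by hand: a pentagon has to be verified only for cancellations between adjacent letters, and this reduces to the zigzag coherence of the duality data of each $\mathcal{M}_i$. A more physical alternative is to realize $\mathcal{C}\#\mathbb{F}_n$ as a category of DHR-type bimodules of $B$ twisted by words in the $\alpha_i$. Here an object of $\mathcal{C}_w$ is a $B$-$B$ bimodule whose left action is twisted by $\alpha_w$, and the tensor product is the Connes fusion over $B$. This gives associativity for free and identifies $\mathcal{M}_{\alpha}$ with the twisted bimodules of Theorem~\ref{Theorem1}. I would attempt this route first.

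For part 1, Theorem~\ref{Theorem1} expresses every duality operator extending $\alpha_i$ as a convex combination of extreme points labelled by simple objects $m\in\mathcal{M}_i$. So any product $\Phi_{i_1}^{\pm}\cdots\Phi_{i_k}^{\pm}$ restricts to $\alpha_w$ on $B$, and it decomposes over extreme points indexed by simples of $\mathcal{C}_w$. The coefficients are given by the fusion multiplicities of the relative tensor product, each weighted by a quantum dimension. After normalization (dividing by $\dim$), this yields a surjective hypergroup homomorphism from the normalized fusion ring of $\mathcal{C}\#\mathbb{F}_n$ onto the fusion rules of the $\{\Phi_i\}$. The map fails to be injective because lattice relations, such as a word acting as translation that becomes trivial, can identify graded pieces. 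This is exactly what ``quotient'' means here.

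For part 2, flowing to an internal symmetry $\cX$ in the IR means that the boundary QCAs $\alpha_w$ become inner or trivial modulo the IR equivalence. I would define a tensor functor $\mathcal{C}\#\mathbb{F}_n\to\cX$ by sending $\mathcal{C}$ to the IR image of the internal symmetry and each extreme point of $\Phi_i$ to the corresponding IR line. Monoidality holds because the fusion of extreme points in Theorem~\ref{Theorem1}(2) is compatible with composing channels. Every object of $\cX$ is generated by these images, so the functor is dominant, and hence $\cX$ is a quotient in the sense of ENO.
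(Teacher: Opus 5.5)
Your main route is essentially the paper's proof. The paper likewise feeds $\mathbb{F}_n\to\mathrm{QCA}(B)\to\mathrm{Aut}_{br}(\mathcal{Z}(\mathcal{C}))$ into Etingof--Nikshych--Ostrik extension theory, phrasing it as a 2-group map induced by the twisting action, which is equivalent to your $\mathrm{BrPic}(\mathcal{C})$ version; it uses the vanishing of the relevant cohomology of the free group to get a unique extension, and then matches extreme points of $\mathrm{Ch}^{\pi(g)}_B(A,A)$ with simples of the $g$-graded component.

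Two cautions. First, draw the composition rule for extreme channels from Theorem~\ref{parameterizingduality}(2) and the module-functor fusion formula of \cite{BJ21}; Theorem~\ref{Theorem1} only gives the simplex. Second, your ``more physical'' alternative ($\alpha_w$-twisted $B$-$B$ bimodules fused over $B$) would put $\mathrm{DHR}(B)\simeq\mathcal{Z}(\mathcal{C})$ rather than $\mathcal{C}$ in degree $e$, so it does not by itself give an extension of $\mathcal{C}$; keep the ENO route.
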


\begin{corollary}\label{Cor:WeaklyIntegral}
If $A\cong \otimes_{\mathbb{Z}} M_{d}(\mathbb{C})$ is a tensor product quasi-local algebra with internal symmetry $\mathcal{C}$, then any $\cX$ emerging in the IR from an RG flow of duality operators is weakly integral.
\end{corollary}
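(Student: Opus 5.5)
By Theorem~\ref{Theorem:UniversalCat}(2), $\cX$ is a quotient of $\mathcal{C}\#\mathbb{F}_n$. So it suffices to show two things: every simple object of $\mathcal{C}\#\mathbb{F}_n$ has squared Frobenius--Perron dimension an integer, and this property passes to quotients.

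For the first point, recall that $\mathcal{C}\#\mathbb{F}_n$ is $\mathbb{F}_n$-graded. The trivial component is $\mathcal{C}$. The component over a generator $g_i$ is the invertible bimodule category $\mathcal{M}_{\alpha_i}$ from Theorem~\ref{Theorem1}, where $\alpha_i$ is the QCA on $B$ induced by $\Phi_i$. Components over longer words are relative tensor products of these, and each is again an invertible $\mathcal{C}$-$\mathcal{C}$ bimodule category.

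Next I would use the hypothesis $A\cong\otimes_{\mathbb{Z}}M_d(\mathbb{C})$. By \cite{evans2025operatoralgebraicapproachfusion}, an internal fusion category symmetry on a tensor product lattice must be integral, so $\mathcal{C}$ is integral; in particular $\mathrm{FPdim}(c)\in\mathbb{Z}$ for every $c\in\mathcal{C}$. Now take a simple $m$ in any invertible component $\mathcal{M}_w$. The object $m\otimes \bar m$ lies in the trivial component $\mathcal{C}$, so
\[
\mathrm{FPdim}(m)^2=\mathrm{FPdim}(m\otimes\bar m)\in\mathbb{Z}.
\]
This is the standard graded-extension argument (as in Etingof--Nikshych--Ostrik). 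It uses only that the rigid dual $\bar m$ lies in $\mathcal{M}_{w^{-1}}$, so that $m\otimes\bar m$ lands in degree $e$.

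I also want the stronger statement that every $\mathrm{FPdim}(m)^2$ lies in a single class of $\mathbb{Q}^{\times}/(\mathbb{Q}^{\times})^2$ for each degree. For this I would note that $m\otimes m'^{*}$ lies in $\mathcal{C}$ for any two simples $m,m'$ of the same degree. That stronger fact is not needed for the present claim.

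For the second point, I would say what "quotient" means as the paper intends: a dominant (surjective) tensor functor $F\colon\mathcal{C}\#\mathbb{F}_n\to\cX$. Such functors preserve Frobenius--Perron dimensions. Every simple $x\in\cX$ is a subobject of some $F(Y)$ with $Y$ simple. Here I expect the main obstacle: a subquotient of an object with $\mathrm{FPdim}^2\in\mathbb{Z}$ need not itself have $\mathrm{FPdim}^2\in\mathbb{Z}$.

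I would handle this by grading $\cX$ instead. The image of the $\mathbb{F}_n$-grading gives a faithful grading of $\cX$ by a group quotient $G$ of $\mathbb{F}_n$, whose degree-$e$ component receives the image of $\mathcal{C}$. Concretely, define the grading on $\cX$ by declaring $x$ to have degree $w$ if $x\subset F(Y)$ with $Y$ of degree $w$. I would then check that this is well defined modulo the kernel, and that each component is nonzero.

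Once this is in place, the trivial component $\cX_e$ is generated by the summands of $F(\mathcal{C})$. Since $\mathcal{C}$ is integral and $F$ is a tensor functor, $\cX_e$ is integral (a fusion subcategory of an image of an integral category). Then for any simple $x\in\cX$, the object $x\otimes x^*$ lies in $\cX_e$, so $\mathrm{FPdim}(x)^2\in\mathbb{Z}$. Hence $\cX$ is weakly integral.

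The key steps to verify carefully are the transfer of the grading through $F$ and the integrality of $\cX_e$. If the grading transfer fails, I would fall back on a weaker argument: the Frobenius--Perron dimension of $\cX$ equals a sum of squared integers over its components, which forces $\mathrm{FPdim}(\cX)\in\mathbb{Z}$. Weak integrality would then follow from the ENO theorem that a fusion category of integer global dimension is weakly integral.
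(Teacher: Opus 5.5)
Your proposal is correct and follows the paper's proof. Both arguments show that $\cX$ is graded by a quotient $G$ of $\mathbb{F}_n$, that its trivial component is a quotient of the integral category $\mathcal{C}$ and hence integral, and then use $d(x)^2=d(x\otimes x^*)$ with $x\otimes x^*$ in that trivial component.

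Your explicit transfer of the grading fills in a step the paper only asserts. You take $K=\{w:\mathbb{1}\subset F(Z)\text{ for some }Z\in(\mathcal{C}\#\mathbb{F}_n)_w\}$, and you note that $x\subset F(Y)$ with $\deg Y\in K$ gives $x\subset F(Y\otimes Z^*)$ with $Y\otimes Z^*\in\mathcal{C}$. Because this works, the dimension computation inside $\mathcal{C}\#\mathbb{F}_n$ is unnecessary. The fallback argument is both unnecessary and circular: it presupposes that the squared dimensions in $\cX$ are integers.
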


The rest of the paper is organized as follows: in Section \ref{sec:definitions}, we lay out the formal definitions of our framework, explaining the connection between non-local operators and C$^\star$-correspondences in infinite volume limit. In Section \ref{sec:SymTFT}, we define the physical boundary subalgebra in the context of a microscopic SymTFT decomposition with symmetry category $\cC$ and a choice of charge category $\cD$ \eqref{sec:charge_cat}. In Section \ref{sec:duality_op}, we give an operator algebraic formulation of duality operators.  In Section \ref{sec:parametrization}, we parametrize the duality operators as topological interfaces between a topological boundary extension and its twisted one, which leads to the universal tensor category \eqref{sec:universal_cat}, an $\mathbb{F}_n$-graded extension of $\cC$, in the UV.

\subsection*{Acknowledgements}

The authors would like to thank Dachuan Lu, Chenqi Meng, Sahand Seifnashri, Nathanan Tantivasadakarn for useful converstaions. C.J. was supported by NSF DMS-2247202. Research at Perimeter Institute is supported in part by the Government of Canada through the Department of Innovation, Science and Economic Development and by the Province of Ontario through the Ministry of Colleges and Universities. This research was supported in part by grant NSF PHY-2309135 to the Kavli Institute for Theoretical Physics (KITP).


\section{Non-local operators in infinite volume}\label{sec:definitions}

In the operator algebraic picture of quantum field theories, a standard approach to describing theories in the infinite volume limit is in terms of the algebras of local operators. The primary mathematical object here is the quasi-local C$^\star$-algebra $A$, consisting of (the completion of) all local operators taken together. On the lattice, $A$ is typically the infinite tensor of finite dimensional matrix algebras localized at lattice sites. 

More formally, an abstract \textit{quasi-local algebra} over a metric space $(X,d)$ is a (unital) C$^\star$-algebra $A$, together with a distinguished family of unital subalgebras $A_{F}\subseteq A$, where $F\subseteq X$ ranges over all metric balls in the space (other choices of ``nice" subsets also are used depending on the context). We assume that if $F\cap G=\varnothing $, then $[A_{F},A_{G}]=0$ and $\bigcup_{F} A_{F}$ is dense in $A$. The subalgebra $A_{F}$ consists of the operators \textit{localized} in the region $F$. Though technically speaking the elements of $A$ are norm-limits of local operators (and are properly called quasi-local operators), we will slightly abuse terminology and refer to the elements of $A$ as local. We often assume some version of \textit{Haag duality}: for a ``nice" region $F$ (e.g. a ball), the local operators which commute with all the local operators localized outside $F$ are localized in $F$. This allows us to detect where something is localized by what it commutes with.

A state can be defined in this setting as a (normalized) positive linear functional $\phi: A\rightarrow \mathbbm{C}$, which in turn furnishes a Hilbert space representation of $A$ which caries $\phi$ as a vector state. The other vector states can be viewed as local perturbations of $\phi$, and in the case where $\phi$ is pure the whole Hilbert space representation is called a \textit{superselection sector} or simply sector. The existence of inequivalent sectors is the key feature of the infinite volume limit underlying the explanatory power of asymptotic reasoning with regard to macroscopic phases and phase transitions.

One important conceptual issue when working in the infinite volume limit is the problem of characterizing non-local operators. These operators map between \textit{different} Hilbert space sectors, hence cannot be naturally realized as limits of local operators in general.

From this perspective, a natural definition of a \textit{general}, a not necessarily local operator is a completely positive (cp) map $\Psi: A\rightarrow A$. This contains the local operators $a\in A$ via $\Phi_{a}(b):=aba^{*}$, and we see that $\Phi_{a}\circ \Phi_{a^{\prime}}=\Phi_{aa^{\prime}}$, giving us operator composition. We again slightly abuse terminology and call a convex combinations of this type of cp map a local operator, and refer to general cp-maps as simply operators.

Describing a state as a positive functional $\phi:A\rightarrow \mathbbm{C}$, a cp map $\Psi$ acts on $\phi$ by $\phi\circ \Psi$. In fact cp maps are the most general operators on the the state space of a C$^\star$-algebra compatible with the natural topology, convex structure, and coupling to larger systems, justifying our use of cp maps as operators.

From this perspective, it is natural to say what it means for an operator to be localized in a region $I$: $\Phi$ is localized in $I$ if for all $a\in B_{I^{c}}$, $a\Phi(1)=\Phi(a)=\Phi(1)a$. Clearly for any operator $b\in B_{I}$, $\Phi_{b}$ is localized $I$. However, in general there could be non-local operators $\Phi$ that are nevertheless localized in the finite region $I$, which is precisely the situation with string operators associated to anyons in topologically ordered quantum many-body systems. This will be the jumping off point for SymTFT and physical boundary subalgebras, which we elaborate in the net subsection.

\subsection{Sectorization of non-local operators}\label{sec:sectors}

An important feature of states in infinite volume which we have alluded to previously is \textit{sectorization}. Informally, two abstract pure states $\phi$ and $\psi$ in infinite volume  lie in the same \textit{superselection sectors}\footnote{we are using this term in the more general sense of physicists, rather than the more specialized usages in algebraic quantum field theory, or more recently in topologically ordered spin systems \cite{Naaijkens_2022}.} if they are related to each other by a quasi-local perturbation. More formally, we can describe this relationship in terms of the representation theory of $A$. 

Associated to any state $\phi$ is its GNS Hilbert space $L^{2}(A,\phi)$, which carries a bounded representation of the quasi-local algebra $A$. $L^{2}(A,\phi)$ is defined by starting with a formal state vector $\Omega_{\phi}$, and forming the vector space of formal quasi-local perturbations of this state vector $\{a\Omega_{\phi}\ |\ a\in A\}$. We then define an inner product via 

$$\langle a\Omega_{\phi}\ |\ b\Omega_{\phi}\rangle:=\phi(a^{*}b).$$

In general, this is not positive definite, so we have to quotient by the kernel and complete to obtain the Hilbert space $L^{2}(A,\phi)$, which has a natural action by $A$. Notice that $\phi$ is realized by the vector state $\Omega_{\phi}\in L^{2}(A,\phi)$, and in fact this is the universal realization of $\phi$ as a vector state.

Recall $\phi$ is a pure state if and only if $L^{2}(A,\phi)$ is an irreducible representation of $A$. From this point of view, one way to say that $\phi$ and $\psi$ lie in distinct superselection sectors is that $L^{2}(A,\phi)$ and $L^{2}(A,\psi)$ are inequivalent as representations of the algebra $A$. Conversely, two pure states are in the same superselection sector if they have (unitarily) isomorphic GNS representations, or equivalently, one state can be realized as a vector state in the GNS representation of the other.

Importantly, the above discussion shows that we can naturally lump quasi-local perturbations of a single state into one an object in a category. This leads us to pass from the set of states to the \textit{category} of Hilbert space representations of $A$ and bounded intertwiners, where we can utilize the mathematical machinery of category theory to study states and various notions of equivalence that occur in the study of quantum phases of matter. 

The reason that this story is often unfamiliar to the working physicist is that sectorization does not usually occur in finite volume: the algebra of local operators $M_{d}(\mathbbm{C})$ has a unique irreducible representation, hence all pure states lie in the same superselection sector. Indeed, the existence of distinct superselection sectors in the infinite volume limit (sectorization) lies at the heart of most rigorous explanations of macroscopic phases and emergent phenomenon, which can be witnessed with recent progress in topologically ordered phases of matter \cite{Naaijkens_2022}. If states are in different phases (whatever we might mean by this term), they are necessarily in distinct superselection sectors though the converse is not usually the case. Nevertheless, this principle tells us that whenever we want to mathematically characterize phases, working with sectors (and hence working in the category of Hilbert space representations of the quasi-local algebra).

The key observation for us is that sectorization also occurs with \textit{non-local operators}. Similarly to the case of states, we can consider quasi-local perturbations of a cp-map via a GNS type construction. However, the role of Hilbert space representation of $A$ that we use in this picture must be generalized to a \textit{correspondence} between C$^\star$-algebras. 

\begin{defn}
Let $A$ and $B$ be C$^\star$-algebra. A \textit{correspondence} is a linear $A$-$B$ bimodule $X$, together with a map $X\times X\rightarrow B$, written $(x,y)\mapsto \langle x\ |\ y\rangle\in B$, satisfying

\begin{enumerate}
\item 
$\langle \cdot\ |\ \cdot\rangle$ is linear in the second variable and conjugate linear in the first variable.
\item 
$\langle x\ |\ y\rangle^{*}=\langle y\ |\ x\rangle$.
\item 
$\langle x\ |\ x\rangle\ge 0$ in $B$, and $\langle x\ |x\rangle =0$ implies $x=0$.
\item 
The norm $\| x\|=||\langle x\ |x\ \rangle||^{\frac{1}{2}}$ is a Banach norm.
\item 
$\langle x\ |\ y\triangleleft b\rangle=\langle x\ |\ y\rangle b$.
\item 
$\langle a\triangleright x\ |\ y\rangle=\langle x\ |\ (a^{*})\triangleright b\rangle$
\end{enumerate}
\end{defn}

Notice that if $B=\mathbbm{C}$, then $X$ with $\langle \cdot\ |\ \cdot\ \rangle$ becomes a Hilbert space, and the left action of $A$ is simply a Hilbert space representation. Let $X$ be an $A$-$B$ correspondence, and $x\in X$. Then the matrix coefficient associated to $x$ defines a cp map $\Phi_{x}:A\rightarrow B$ by

$$\Phi_{x}(a)=\langle x\ |\ a\triangleright x\rangle\,.$$

Conversely, starting from a cp map $\Phi: A\rightarrow B$, we start with a ``state vector" $\Omega_{\Phi}$, but now perturb on the left by elements of $A$ and on the right by elements of $B$

$$\text{span}\{a\Omega_{\Phi}b\, :\ a\in A, b\in B\}\,.$$

Then we define a $B$-valued inner product $$\langle a_{1}\Omega_{\Phi}b_{1}\ |\ a_{2}\Omega_{\Phi}b_{2}\rangle:=b^{*}_{1}\Phi(a^{*}_{1}a_{2})b_{2}.$$

Again, we quotient by the kernel and complete to obtain an $A$-$B$ correspondence, which we call $L^{2}(\Phi)$. This is the universal correspondence containing $\Phi$ as a ``vector state".

For the same reason as for states, it is extremely useful to consider non-local operators together with their quasi-local perturbations in a single object, namely a correspondence. In particular, we can access categorical structure. Indeed, the collection of $A$-$B$ correspondences forms a C$^\star$-category, where morphisms between $A$-$B$ correspondences $X$ and $Y$ are linear bimodule maps $f:X\rightarrow Y$ such that $\langle f(x)|y\rangle_{Y}=\langle x|f^{*}(y)\rangle_{X} $ for some $f^{*}:Y\rightarrow X$. Such maps are automatically bounded, and composition is the usual (for details, see \cite{Chen_2022}).

Operators, unlike states, have a natural composition operators, which is the crucial part of their existence. This is reflected at the correspondence level through the \textit{tensor product} of correspondences. If $X$ is an $A$-$B$ correspondence and $Y$ is a $B$-$C$ correspondence, then $X\boxtimes Y$ is an $A$-$C$ correspondence, defined with an inner product on simple tensors

$$\langle x_{1}\otimes y_{1}\ |\ x_{2}\otimes y_{2}\rangle_{X\boxtimes Y}:=\langle y_{1}\ |\ \langle x_{1}\ | x_{2}\rangle_{X}\triangleright y_{2}\rangle_{Y}.$$

As usual we quotient by the kernel and complete, then equip the resulting space with the obvious left $A$-action and right $C$-action. If $x\in X$ and $y\in Y$, and $\Phi_{x}, \Phi_{y}$ are the associated vector state cp maps, then $\Phi_{x}\circ \Phi_{y}$ is a vector state in $X\boxtimes Y$ via the state vector $x\otimes y$. 

The collection of C$^\star$-algebras, correspondences, and intertwiners assembles into a C$^\star$ 2-category \cite{Chen_2022}. For a single algebra $A$, the collection of all $A$-$A$ correspondences is a C$^\star$-tensor category, which we denote $\text{Bim}(A)$. For a quasi-local algebra, this C$^\star$-tensor category is the ``sectorization" of non-local operators.

\subsection{Microscopic SymTFT picture}\label{sec:SymTFT}

Fusion category symmetries of quantum field theories have emerged as an important tool in understanding quantum field theories and quantum many-body systems, particularly in \spd{1} where the story is better understood. In this paper, we will focus on the lattice setting. The usual picture for fusion categorical symmetry utilizes an algebra of ``symmetry operators", encoded either by matrix product operators (MPOs) or (weak) Hopf C$^\star$-algebra acting on the local Hilbert spaces \cite{meng2025noninvertiblesptsonsiterealization,inamura202611dsptphasesfusion,molnar2022matrixproductoperatoralgebras}. An alternative picture has emerged which provides powerful tools for analyzing the structure of symmetric phases (both gapped and gappless) called SymTFT \cite{Kong_2017,freed2024topologicalsymmetryquantumfield,evans2025operatoralgebraicapproachfusion,Bhardwaj_2025}.

In the SymTFT picture for a spin system with local Hamiltonian $H=\sum H_{I}$, we have a decomposition of the system into a one dimension higher TQFT (called the SymTFT) sandwiched between a topological boundary and a physical boundary, as in the following picture:

\[
\diagram{1}{
\fill[YaleLightBlue] (1,-.07) rectangle (2,.07);
\draw (0,0)--(3,0)node[right]{$A$};
\node at (1.5,-.4) {$A_I$};
}\quad  \simeq \quad  
\diagram{2}{
\fill[YaleLightGrey] (0,-.5) rectangle  (2,.5);
\fill[YaleLightBlue] (.5,-.55) rectangle  (1.5,.5);
\node at (1,-.7) {$H_I \in B_I$};
\draw[trivial](0,-.5)--(2,-.5);
\draw[trivial](0,.5)--(2,.5);
}\,.
\]

The physical boundary (also sometimes called the dynamical boundary) is usually very complicated, and should contain the local terms $H_{I}$ of the Hamiltonian. The topological boundary, in contrast, is simple and witnesses the categorical symmetry via topological defects. A key feature of this story is that the symmetry operators are secondary, and the TQFT itself takes center stage. This allows for the direct application of TQFT ideas to the study of symmetric phases.

From the operator algebra perspective, it is natural to try to access a SymTFT decomposition by idenitifying the subalgebra $B\subseteq A$ of local operators localized near the physical boundary, and then trying to recover the rest of the SymTFT structure from this. This leads to a natural question: when can we realize an abstract quasi-local algebra the local operators at a physical boundary of a TQFT? This problem was addressed in \cite{evans2025operatoralgebraicapproachfusion}.

The key idea is that the non-local string operators that create anyons in the bulk can be pushed into the boundary to define \textit{non-local} operators on the boundary theory. Due to the topological nature of the bulk string operators, we expect that after local perturbations (given by post composing, so ``right" perturbations), we can obtain operators that are \textit{localized in any other (sufficiently large) region we like}. In particular, the correspondence associated to a string operator should contain operators localized in any sufficiently large region, and should be generated as a right modules by the pieces of the string operator localized in any large enough interval.

To formalize this at the level of correspondences, let's focus on the case where space is discrete and one-dimensional, i.e. $\mathbbm{Z}$. We assume that the local subalgebras $A_{I}$ are finite dimensional C$^\star$-algebras assigned to intervals $I$, and that we have Haag duality for intervals.

For any $A$-$A$ correspondence $X$, set $X_{I}:=\{x\in X\ : ax=xa\ \text{for all}\ a\in A_{I^{c}}\}$. The vectors $x\in X_{I}$ are precisely the elements that correspond to operators localized in $I$, i.e. the associated cp maps $\Phi_{x}:A\rightarrow A$ satisfy $\Phi_{x}(a)=a\Phi_{x}(1)=\Phi_{x}(1)a$ for all $a\in A_{I^{c}}$. A DHR bimodule will essentially be any correspondence generated under post-composition with quasi-local perturbations by operators localized \textit{anywhere}. Because of the finite scale of the lattice, we will only assume that we can be localized in any sufficiently large interval. We have the following formal definition.

\begin{defn} 
(\textbf{DHR bimodules}). An $A$-$A$ correspondence $X$ is a \textit{DHR}-bimodule if for there is some $R\ge 0$ so that for any interval with $|I|\ge R$, $\text{dim}(X_{I})<\infty$ and $X=X_{I}A$.
\end{defn}

This definition looks slightly different than previous definitions in the literature \cite{jones2024dhrbimodulesquasilocalalgebras, evans2025operatoralgebraicapproachfusion}. We say an $A$-$B$ correspondence $X$ is right finite if there exists a finite set $\{b_{i}\}\subseteq X$ such that for all $x\in X$, 

$$x=\sum_{i} b_{i}\langle b_{i}|x\rangle\,.$$

We call $\{b_{i}\}$ a finite projective basis. Now we make contact with the usual definition of DHR bimodule in the literature.

\begin{theorem}
Suppose $A$ is a locally finite-dimensional net of algebras over $\mathbbm{Z}$ satisfying weak algebraic Haag duality. An $A$-$A$ correspondence $X$ is a DHR bimodule if and only if there exists an $S$ such that for all intervals $|I|\ge S$, there is a finite projective basis contained in $X_{I}$.
\end{theorem}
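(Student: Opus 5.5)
Both directions come down to understanding the space $X_I$ as a module over the algebra of operators that commute with $A_{I^c}$. We fix notation: for an interval $I$, let $A_{I^c}$ be the C$^\star$-algebra generated by the $A_J$ with $J\cap I=\varnothing$. Weak algebraic Haag duality says that the relative commutant $A_{I^c}'\cap A$ is contained in $A_{I^+}$, for $I^+$ a fixed enlargement of $I$ (by some $r$ on each side). The key preliminary observation is that $X_I$ is a right module over this relative commutant, hence over a subalgebra of $A_{I^+}$: if $x\in X_I$ and $b$ commutes with $A_{I^c}$, then $xb\in X_I$. Similarly, for $x,y\in X_I$ the inner product $\langle x|y\rangle$ commutes with $A_{I^c}$. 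Indeed, for $a\in A_{I^c}$,
\[
\langle x|y\rangle a=\langle x|ya\rangle=\langle x|ay\rangle=\langle a^*x|y\rangle=\langle xa^*|y\rangle=a\langle x|y\rangle .
\]
So $\langle X_I|X_I\rangle\subseteq A_{I^+}$, which is finite dimensional.

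\textbf{($\Leftarrow$)} Suppose $\{b_i\}\subseteq X_I$ is a finite projective basis. Then every $x$ satisfies $x=\sum_i b_i\langle b_i|x\rangle\in X_IA$, so $X=X_IA$. For finite dimensionality, take $x\in X_I$. By the observation above each coefficient $\langle b_i|x\rangle$ lies in $A_{I^+}$, so $X_I\subseteq \sum_i b_iA_{I^+}$. This is a finite sum of finite-dimensional spaces, hence $\dim X_I<\infty$. Taking $R=S$ gives the DHR condition.

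\textbf{($\Rightarrow$)} Suppose $X=X_IA$ and $\dim X_I<\infty$ for all $|I|\ge R$. Fix such an $I$ and set $J=I^+$. Then $X_I\subseteq X_J$, and $X_I$ is finite dimensional. The plan is to show that $X_J$ is a finitely generated projective Hilbert module over the finite-dimensional C$^\star$-algebra $D=A_{J^c}'\cap A$, and then to transport a $D$-basis of $X_J$ to an $A$-basis of $X$.

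\emph{Basis over $D$.} Restricting the inner product to $X_J$ gives a $D$-valued inner product, by the same computation as above. It is positive definite, because $\langle x|x\rangle=0$ forces $x=0$. A finite-dimensional right Hilbert module over a finite-dimensional C$^\star$-algebra always admits a finite Parseval frame $\{b_i\}$ with $x=\sum_i b_i\langle b_i|x\rangle$ for all $x\in X_J$. One gets it by decomposing into matrix blocks and applying Gram--Schmidt on each minimal projection. Since $X_J$ is finite dimensional, this step is elementary.

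\emph{Extension to $X$.} For $x\in X_I$ and $a\in A$ we have $x\in X_J$, so
\[
\sum_i b_i\langle b_i|xa\rangle=\Big(\sum_i b_i\langle b_i|x\rangle\Big)a=xa .
\]
Thus the reconstruction formula holds on $X_IA$. This set is dense in $X$ (equal to it, if $X_IA$ is read as the closed span). Both sides are norm-continuous in $x$, so the formula holds on all of $X$. Hence $\{b_i\}\subseteq X_J$ is a finite projective basis, and it exists for every $|J|\ge R+2r=:S$. Strictly speaking we get this for intervals of the form $I^+$, but any interval of length at least $S$ contains one of these, and $X_{I^+}\subseteq X_{J}$ whenever $I^+\subseteq J$, so the basis works for every such $J$.

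\textbf{Main obstacle.} The delicate point is the algebraic interplay between Haag duality and module structure. First, we need $\langle X_J|X_J\rangle\subseteq D$ with $D$ finite dimensional; this is exactly where weak Haag duality enters. Second, we need the frame on $X_J$ to be built as a right $D$-module basis rather than merely a vector-space basis, so that the reconstruction identity is right $A$-linear. I would verify carefully that the projective basis of a finite-dimensional Hilbert $D$-module reproduces every vector, including in the case where the inner products $\langle b_i|b_i\rangle$ are not full in $D$.
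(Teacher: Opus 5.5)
Your argument is correct and is essentially the paper's proof of the forward direction. Weak Haag duality forces inner products of localized vectors into a finite-dimensional C$^\star$-algebra, a finite-dimensional Hilbert module over such an algebra has a finite projective basis, and $X=X_IA$ extends the reconstruction formula to all of $X$; the paper uses $X_IA_{I^{+T}}$ as a module over $A_{I^{+T}}$ where you use $X_{I^+}$ over the relative commutant $A_{(I^+)^c}'\cap A$, which is only a cosmetic difference. Your converse direction, which the paper does not write out, is also correct.
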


\begin{proof}
Suppose $|I|\ge R$, so that $X_{I}A=X$. By weak Haag duality, there is some $T$ such that $A^{\prime}_{I^{c}}\cap A\subseteq A_{I^{+T}}$. Thus if we take $X_{I}A_{I^{+T}}\subseteq X_{I^{+T}}$, restricting the inner product gives this subspace the structure of a right Hilbert module over the finite dimensional C$^\star$-algebra $A_{I^{+T}}$. But Hilbert modules over finite dimensional C$^\star$-algebras always admit projective bases, and thus we can find a $\{b_{i}\}\subseteq X_{I^{+T}}$ with $$\sum_{i} b_{i}\langle b_{i} | x\rangle=x$$ 

\noindent for all $x\in X_{I}A_{I^{+T}}.$ But by the DHR condition then $X=X_{I}A\subseteq X_{I}A_{I^{+T}}A=X$. Thus any $x\in X$ can be written as $\sum _{i} x_{i}a_{i}$, where $x_{i}\in X_{I}A_{I^{+T}}$ and $a_{i}\in A$.

Thus

\begin{align*}
\sum_{i} b_{i}\langle b_{i} | x \rangle&=\sum_{i,j} b_{i}\langle b_{i} | x_{j}a_{j}\rangle\\
&=\sum_{j} \left(\sum_{i} b_{i}\langle b_{i} | x_{j}\rangle\right)a_{j}\\
&=\sum_{j}x_{j}a_{i}=x\,.
\end{align*}
Hence we have a projective basis localized in $I^{+T}$, and thus setting $S=R+T$, we have the desired result.

\end{proof}

Now, we connect abstract DHR bimodules to the bulk SymTFT picture. 

\begin{theorem} [\cite{jones2024dhrbimodulesquasilocalalgebras}]. 
The C$^\star$-tensor category of $\text{DHR}(A)$ consisting of DHR bimodules has a natural unitary braiding.
\end{theorem}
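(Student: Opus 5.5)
We construct the braiding explicitly, in the style of Doplicher--Haag--Roberts, using the localized projective bases supplied by the previous theorem. Let $X,Y$ be DHR bimodules, and choose $S$ so that both admit finite projective bases localized in every interval of length at least $S$. The model for the construction is the classical sector picture: if $x\in X_I$ and $y\in Y_J$ with $I,J$ far apart, then $x$ commutes with $A_J$ and $y$ commutes with $A_I$. This should give a canonical ``swap'' $x\boxtimes y\mapsto y\boxtimes x$, since neither vector sees the other's support. Using $\text{DHR}(A)$, we define
\[
u^{I,J}_{X,Y}:X\boxtimes Y\to Y\boxtimes X,\qquad u^{I,J}_{X,Y}\Big(\sum_{i,j} b_i\boxtimes c_j\, a_{ij}\Big):=\sum_{i,j} c_j\boxtimes b_i\, a_{ij},
\]
where $\{b_i\}\subseteq X_I$ and $\{c_j\}\subseteq Y_J$ are projective bases and $J$ lies to the right of $I$, at distance larger than the Haag duality constant $T$. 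Every element of $X\boxtimes Y$ is of the form $\sum b_i\boxtimes c_j a_{ij}$ because $X=X_IA$ and $Y=Y_JA$.

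The first step is to check that $u$ is a well-defined unitary bimodule map. For unitarity, we compute inner products:
\[
\langle b_i\boxtimes c_j\mid b_k\boxtimes c_l\rangle=\langle c_j\mid\langle b_i|b_k\rangle c_l\rangle .
\]
Here $\langle b_i|b_k\rangle\in A'_{I^c}\cap A$, so by (weak) Haag duality it is localized near $I$. It therefore commutes with $c_l$, which lies in $Y_J$, and with $\langle c_j|c_l\rangle$, which is localized near $J$. The same computation on the other side gives $\langle c_j|c_l\rangle\langle b_i|b_k\rangle$, and these agree because the two factors commute. Hence $u$ is isometric, and so well defined. It is surjective because its image contains a generating set. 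For the left $A$-action: if $a\in A_K$ with $K$ far from $I\cup J$, then $a$ passes through both tensor factors. Such elements, together with the localized elements, generate a dense subalgebra, and the general case follows by writing $a\,b_i=\sum_k b_k\langle b_k|a b_i\rangle$ and pushing the coefficient through.

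The second step, which I expect to be the main obstacle, is independence of the choices. We must show that $u^{I,J}$ does not depend on the bases and is unchanged when $I$ and $J$ are moved, as long as $J$ stays far to the right of $I$. Independence of the basis within fixed $I,J$ follows from the projective basis identity. To move $J$ to $J'$, we choose an interval containing both $J$ and $J'$ that is disjoint from a neighbourhood of $I$, take a projective basis there, and expand the old basis in the new one with coefficients $\langle c'_k|c_j\rangle$. By Haag duality these coefficients are localized away from $I$, so they commute past $b_i$. This shows the two definitions agree. Since only one space direction is available on $\mathbb{Z}$, ``$J$ to the right of $I$'' defines a connected family of configurations, and we get a well-defined $c_{X,Y}$. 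The opposite configuration gives $c_{Y,X}^{-1}$; the two need not agree, which is why the result is a braiding rather than a symmetry. The delicate point is the finite-range bookkeeping: every interval has to be enlarged by the Haag duality constant $T$, and the intervals must be chosen large enough that the bases from the previous theorem exist.

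Finally, we verify naturality and the hexagon identities. For naturality, let $f:X\to X'$ be an intertwiner. Then $f$ maps $X_I$ into $X'_I$, because being localized in $I$ is expressed through commutation with $A_{I^c}$. We may therefore compute $c$ on localized vectors on both sides and compare. For the hexagons, note that $X\boxtimes Y$ is again DHR with a basis $\{b_i\boxtimes c_j\}$ localized in the convex hull of $I\cup J$. We choose $I,J,K$ successively far apart; both sides of each hexagon then act on $b\boxtimes c\boxtimes d$ as the same permutation of localized basis vectors, and so they coincide.
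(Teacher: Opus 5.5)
You are using the same construction as the source the paper cites for this statement; the paper itself gives no proof and defers to \cite{jones2024dhrbimodulesquasilocalalgebras}. That construction defines $u^{I,J}_{X,Y}$ on projective bases localized in far-separated intervals, gets unitarity from commuting localized inner products, gets independence by transporting the intervals, and checks naturality and the hexagons on localized vectors. Your unitarity, independence and hexagon arguments are fine. For naturality you implicitly need $c(x\boxtimes y)=y\boxtimes x$ for arbitrary $x\in X_I$, $y\in Y_J$ (such as $x=f(b_i)$), not just for basis vectors. This follows by expanding $x$ in a localized basis of $X'$, whose coefficients lie in $A_{I^{+T}}$ and so commute with $y$.

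The step that fails as written is left $A$-linearity in your first step. Take $a\in A_K$ with $K$ reaching towards $J$. The coefficient $\langle b_k|ab_i\rangle$ is then only localized near the convex hull of $K\cup I$, so it need not commute with $c_j$ and cannot be pushed through. Your reduction to elements localized near $I$, near $J$, or far from both also presupposes that such elements generate a dense subalgebra of $A$. That is an additivity property of the net, and it is not among the hypotheses, which are only local finite-dimensionality and weak algebraic Haag duality.

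The repair is to reverse the order of your first two steps. Your independence argument only compares right $A$-linear maps on the generators $b_i\boxtimes c_j$, so it does not use left linearity. Given $a\in A_K$, transport $(I,J)$ to a configuration $(I',J')$ with both intervals far to the right of $K$. Then $a$ commutes with the new localized bases, so $u(a\,b'_i\boxtimes c'_j a')=u(b'_i\boxtimes c'_j a a')=c'_j\boxtimes b'_i a a'=a\,c'_j\boxtimes b'_i a'$. Thus $u$ commutes with every local operator, and hence with all of $A$ by density of local operators and boundedness of $u$. This is the standard DHR transportability argument; with it in place, the rest of your proof goes through.
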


We can now describe the SymTFT picture.

\begin{defn}
Let $A$ be a quasi-local algebra over $\mathbbm{Z}$ which is locally finite dimensional and satisfies weak algebraic Haag duality. A unital subalgebra $B\subseteq A$ is called a physical boundary subalgebra if

\begin{enumerate}
\item 
$\bigcup_{I} (B\cap A_{I})$ is norm dense in $B$.
\item 
There exists a conditional expectation $E:A\rightarrow B$ such that $(B\subseteq A, E)$ is a Lagrangian Q-system in $\text{DHR}(B)$.
\end{enumerate}
\end{defn}

Some immediate consequences of the definition:

\begin{itemize}
    \item 
    $E:A\rightarrow B$ is locality preserving, i.e. there is some $R$ such that $E(A_{I})\subseteq B_{I^{+R}}=B\cap A_{I^{+R}}$.
    \item 
    The inclusion is irreducible, i.e. $B^{\prime}\cap A=\mathbbm{C}$.
    \item 
    $\text{DHR}(A)$ is trivial.
    \item 
    There is a fusion category $\mathcal{C}$ of $A$ modules internal to $\text{DHR}(B)$, realized concretely as a class of correspondences over $A$, such that $\text{DHR}(B)\cong \mathcal{Z}(\mathcal{C})$ \cite{evans2025operatoralgebraicapproachfusion,hataishi2025structuredhrbimodulesabstract}
    \item 
    The fusion hypergroup of $\mathcal{C}$ acts by unital completely positive maps on $A$, such that the invariant quasi-local operators are precisely $B$.
\end{itemize}

The fusion category $\mathcal{C}$ above is called the \textit{symmetry category}. Notice that from this perspective, the symmetry category $\mathcal{C}$ is secondary, and the symmetric operators are what we focus on. The symmetric operators can arise however you wish: for example, as the local operators that commute with an MPO or (weak)-Hopf algebra action. The category $\mathcal{C}$ and its action by quantum channels (ucp maps) are \textit{canonically defined} by the symmetric subalgebra. 

\subsection{Charge categories}\label{sec:charge_cat}
Focusing on the physical boundary algebra $B$ itself, we can ask a slightly different question: which abstract quasi-local algebras over $\mathbbm{Z}$ are suitable to be physical boundaries of a TQFT in the first place? An obvious answer is that $\text{DHR}(B)$ should be the Drinfeld center of a fusion category. In \cite{hataishi2025structuredhrbimodulesabstract}, it was shown that under some reasonable assumptions $\text{DHR}(B)$ is automatically the Drinfeld center of a category called the \textit{charge category} $\mathcal{D}$, which is generally different from (or ``dual to) the symmetry category $\mathcal{C}$, the latter of which only arises when $B$ is embedded as a physical boundary subalgebra of a quasi-lcoal algebra $A$.

To describe this category, consider the truncated quasi-local algebra $B_{-}$, which is the C$^\star$-subalgebra of $B$ generated by $B_{I}$ with $I\le 0$. We are interested in non-local operators (i.e. cp maps) defined only on $B_{-}$ that are localized around $0$. Following the translation of this idea into bimodules, we have the following definition.

\begin{defn}
A $B_{-}$-$B_{-}$ correspondence $X$ is a \textit{charge bimodule} if there exists some $I=[-k,0]$ such that $X_{I}B=X$. We denote the category of charge bimodules by $\text{DHR}_{-}(B)$.
\end{defn}

$\text{DHR}_{-}(B)$ is a C$^\star$-tensor category with the relative tensor product, however it does not have a braiding. From \cite{hataishi2025structuredhrbimodulesabstract,jones2025holographybulkboundarylocaltopological}, we do in general have a braided tensor functor $\text{DHR}(B)\rightarrow \mathcal{Z}(\text{DHR}(B_{-}))$. Under some reasonable assumptions, $\text{DHR}_{-}(B)$ is a fusion category, and the functor described above is an equivalence \cite{hataishi2025structuredhrbimodulesabstract,jones2025holographybulkboundarylocaltopological}. In particular, for (multi)-fusion spin chains (see the next section), this is a braided equivalence \cite{hataishi2025structuredhrbimodulesabstract}.

The main class of abstract physical boundary algebras  that we consider are fusion spin chains, built from a fusion category $\mathcal{D}$, and a strongly tensor generating object $X$. In this case, the charge category is $\mathcal{D}$ itself. Note that in the categorical symmetry context, the fusion spin chains in question arise as the \textit{symmetric subalgebras} of a larger quasi-local algebra $A$ with symmetry category $\mathcal{C}$ which in general is distinct from $\mathcal{D}$, but is always Morita equivalent to it by our above discussion.

\begin{example}[Q-system models]

We briefly review the \spd{1} lattice model for topological phases enriched with fusion category symmetries discussed in \cite{meng2025noninvertiblesptsonsiterealization}. This specific construction provides a convenient way to construct symmetric local operators on an $\sH$-symmetric quantum spin chain, where $\sH$ is a Hopf C$^\star$-algebra. We start with the Hopf C$^\star$-algebra $\sH$ as the local version of the fusion category symmetry and define the charge category as the representation category of the Hopf C$^\star$-algebra, isomorphic to $\cC_{\Hilb_f}^\vee:=\text{Fun}_\cC(\Hilb_f,\Hilb_f)$, where $f$ is a unitary fiber functor $f: \cC \rightarrow \Hilb$. A generic lattice model with fusion category symmetry $(\cC, f)$ consists of: local Hilbert spaces that carry representations of the Hopf algebra and local interactions given by Hermitian intertwiners between representations on the local Hilbert space \cite{Inamura_2022}, which directly generalizes the construction with finite group symmetry in \cite{Lan_2024}. Note that for different fiber functors $f, h$ of $\cC$, the associated dual categories are not the same $\cC_{\Hilb_f}^\vee \ncong \cC_{\Hilb_h}^\vee$. Thus the choice of the charge category is not unique. Examples include the isocategorical groups, $\Rep^\dagger (D_{4n})$ for $n \in \Z, n >3$, $\Rep^\dagger (A \rtimes \Z_2) \times A$ for finite abelian group $A$ \cite{meng2025noninvertiblesptsonsiterealization}. We will choose the charge category as the one associated with the trivial fiber functor $\fgt: \cC \rightarrow \Hilb$, and call it the reference charge category. 

    Given a symmetry category $\cC = \Rep^\dagger(\sH^*)$, we choose the reference charge category $\cD:= \cC_{\Hilb_{\fgt}}^\vee$ in which objects are symmetry charges (localized excitations on the physical boundary) and morphisms are interactions. We use fusion channels to define the local interactions. A local $\cC$-symmetric tensor in region $I$ is
\[
\begin{tikzpicture}[baseline={($(current bounding box)$)}]
        \draw[string] (0,-1) -- node[right]{\small $X_1$} (0,-.3);
        \draw[string] (0,.3) -- node[right]{\small$Y_1$} (0,1);
        \filldraw[fill=white] (-.3,-.3) rectangle node{$f$} (2.3,.3);
        \draw[string] (.6,-1) -- node[right]{\small$X_2$} (.6,-.3);
        \draw[string] (.6,.3) -- node[right]{\small$Y_2$} (.6,1);
        \node at (1.5,.6) {$\cdots$};
        \node at (1.5,-.6) {$\cdots$};
        \draw[string] (2,-1) -- node[right]{\small$X_n$} (2,-.3);
        \draw[string] (2,.3) -- node[right]{\small$Y_m$} (2,1);
        \node at (1,-1.4) {$X^{\ot I}$};
\end{tikzpicture} \in A_I := \End_{\cD}(X^{\ot I})
\]
For $I \subseteq J$, we define the natural inclusion
\begin{align*}
	A_I \hookrightarrow A_J, \quad f \mapsto \one_X^{\ot J < I } \ot f\ot \one_X^{\ot J >I}
\end{align*}
which is graphically represented as
\[
\begin{tikzpicture}[baseline={($(current bounding box)$)}]
        \draw[string] (0,-1) --  (0,-.3);
        \draw[string] (0,.3) --  (0,1);
        \filldraw[fill=white] (-.3,-.3) rectangle node{$f$} (2.3,.3);
        \draw[string] (.6,-1) --  (.6,-.3);
        \draw[string] (.6,.3) --  (.6,1);
        \node at (1.5,.6) {$\cdots$};
        \node at (1,-1.2) {$X^{\ot I}$};
        \node at (1.5,-.6) {$\cdots$};
        \draw[string] (2,-1) --  (2,-.3);
        \draw[string] (2,.3) --  (2,1);
\end{tikzpicture} \quad \mapsto \quad 
\begin{tikzpicture}[baseline={($(current bounding box)$)}]
		\draw[string] (-.6, -1)--(-.6,1);
		\draw[string] (2.6, -1)--(2.6,1);
        \draw[string] (0,-1) --  (0,-.3);
        \draw[string] (0,.3) --  (0,1);
        \filldraw[fill=white] (-.3,-.3) rectangle node{$f$} (2.3,.3);
        \draw[string] (.6,-1) --  (.6,-.3);
        \draw[string] (.6,.3) --  (.6,1);
        \node at (1.5,.6) {$\cdots$};
        \node at (1,-1.2) {$X^{\ot J}$};
        \node at (1.5,-.6) {$\cdots$};
        \draw[string] (2,-1) --  (2,-.3);
        \draw[string] (2,.3) --  (2,1);
\end{tikzpicture} \,.
\]
Then define $$A:=\text{colim}_{I} A_{I}.$$ Identifying each $A_{I}$ with its image in $A$ yields an abstract spin system over $\mathbbm{Z}$. We will give the detailed basis decomposition along with the duality operators in Section \ref{sec:basis}. 

For now, we quote the results in \cite{meng2025noninvertiblesptsonsiterealization} and provide a convenient realization of symmetric local operators.

As discussed above, given a Hopf C$^\star$-algebra $\sH$ describing the local version of a fusion category symmetry, the local Hilbert space of an $\sH$-symmetric quantum spin chain is $\mathrm{Irr}(\Rep^\dagger(\sH))$. This is a large-enough local Hilbert space, in which different $\sH$-symmetric phases (SPTs or SSBs) are embedded. $\sH$-symmetric phases are in one-to-one correspondence with the Morita classes of unitary separable Frobenius algebras (Q-systems) in $\cC_{\Hilb_f}^\vee$. On an infinite chain, the Q-system is labeled by $(\Z,\,\sH,\, A,\,1-m^\dagger m)$, where the each site of the spin chain is indexed by $i \in \Z$, $\sH$ is the Hopf C$^\star$-algebra describing the symmetry of the spin chain, $\Psi(\{i\})=A$ is the local Hilbert space at site $i$ and $\Phi(\{i,i+1\}) = 1-m^\dagger m$ gives the nearest-neighbor interaction. In short, the total Hilbert space is given by $\cH = \bigotimes_{i \in \Z}A$ and the Hamiltonian is
\[
H = \sum_{i \in \Z}1-(m^\dagger m)_{i,i+1} = \sum_{i \in \Z}1- \,
\diagram{.5}{

        \draw (.1,0) -- node[above right]{\tiny$\;m$} node[below left]{\tiny$A$}(.6,.5) -- node[below right]{\tiny$A$}(1.1,0);
        \draw (.1,2) -- node[below right]{\tiny$\;m^\dagger$} node[above left]{\tiny$A$}(.6,1.5) -- node[above right]{\tiny$A$}(1.1,2);
        \draw (.6,.5) -- (.6,1.5);
    }\,.
\]
We summarize the notations in the table below
 \begin{center}
{\renewcommand{\arraystretch}{2}
\begin{tabular}{|>{\RaggedRight\arraybackslash}p{5cm}|
                >{\RaggedRight\arraybackslash}p{6cm}|}
  \hline
  (dual) Hopf C$^\star $-algebra & ($\mathsf{H^*}$) $\mathsf{H}$\\ 
  \hline
  Symmetry category $\mathcal{C}$ & $ \Rep^\dagger(\mathsf{H^*})$ \\ 
  \hline
  Charge category $\cD:=\mathcal{C}_{\Hilb_f}^\vee$ & $\Rep^\dagger(\mathsf{H})$ \\ 
  \hline
  Commuting projector fixed point model $(\Z,\,\sH,\,A,\,1-m^\dagger m)$ &  $A$ is Q-system in $\mathcal{C}_{\Hilb_f}^\vee$,
  local Hilbert space $\mathcal{H}_i = A$ $\forall \, i \in \Z$, Hamiltonian $H = \sum_i 1-(m^\dagger m)_{i,i+1}$.
 \\
  \hline
\end{tabular} 
}
\end{center}
Thus given a charge category, we can construct a \spd{1} lattice model realizing all $\sH$-symmetric phases via the projection $P_k: \mathrm{Irr}(\Rep^\dagger(\sH)) \rightarrow A_k$. The Hamiltonian is written as
\[
H = - \sum_k \sum_{i=1}^L \lambda_k \left( m_k^{i,i+1} P_k^i P_k^{i+1} \right)^\dagger \left( m_k^{i,i+1} P_k^i P_k^{i+1} \right)\,
\]
for all $A_k$ in $\mathcal{C}_{\Hilb_f}^\vee$ with the associated tuning parameter $\lambda_k$.

When $\cC=\Hilb_{\Z_2}$, $\cC=\Rep^\dagger(\Z_2)$, we have $A_0 = \Fun(\Z_2/\Z_2)$ and $A_1=\Fun(\Z_2/e)$, giving the generators of symmetric local operators $\{Z_iZ_{i+1},\,X_i\}_{i \in \Z}$.

\end{example}

\section{Duality operators}\label{sec:duality_op}

We will treat the physical boundary algebra $B$ as fundamental, and consider the different topological boundaries of our SymTFT as variable. Given a fixed physical boundary algebra $B$, the possible topological boundaries in the SymTFT decomposition correspond to Lagrangian algebras $A\in \text{DHR}(B)$.

\begin{defn}
Let $B\subseteq A$ be a SymTFT decomposition with fusion category symmetry $\mathcal{C}$.  A \textit{duality operator} consists of a completely positive map $\Psi: A\rightarrow A$ such that:

\begin{enumerate}
\item 
There exists an $R\ge 0$ such that $\Psi(A_{F})\subseteq (A)_{F^{+R}}$

\item 
$\Psi$ restricts to a $*$-automorphisms from $B$ to $B$.

\end{enumerate}
\end{defn}

We note that one consequence of our assumption, is that $B$ is in the multiplicative domain of the channel $\Psi$, so that in particular

$\Psi(b_{1}a b_{2})=\Psi(b_1)\Psi(a)\Psi(b_2)$.

We also note that $\Psi$ is automatically unital. If we take a duality channel $\Psi:A_{1}\rightarrow A_{2}$, $\Psi|_{B}$ is a bounded spread isomorphism, sometimes called a quantum cellular automata (QCA).

\begin{quest}\label{quest1}
Given a bounded spread automorphism $\alpha:B\rightarrow B$, how do we parameterize extensions to duality channels $\Psi: A_{1}\rightarrow A_{2}$ such that $\Psi|_{B}=\alpha$?
\end{quest}

The version of this question which asks when we can extend a bounded spread isomorphism on $B$ to an honest (invertible) QCA on $A$ was answered in \cite{jones2025quantumcellularautomatacategorical}.

Another important reason for considering duality operators is their role in the context of generalized symmetry. It was argued in \cite{Seiberg_2024,Li_2023} that the canonical example of a duality operators, the Kramers-Wannier operator (see the next section for a full explanation), implements a form of generalized fusion category symmetry that goes beyond the usual internal story. This is witnessed by the fact that the Kramers-Wannier operator \textit{almost} has the fusion rules of the Ising fusion category, but ``mixes with" translation. The idea is that under any RG flow which is constrained by the internal $\mathbbm{Z}_{2}$ symmetry, this will give rise to an honest internal symmetry in the IR. This concept is sometimes called \textit{emmanent symmetry} in the literature \cite{Cheng_2023}.

Part of our motivation is to systematically study this type of symmetry using our analysis of duality operators.




\subsection{Bimodule channels and topological boundary defects}

Suppose we have a physical boundary algebra $B$ and two topological boundary extensions $B\subseteq A_{1}$ and $B\subseteq A_{2}$. We want to capture topological defects \textit{between} these boundary conditions in terms of quantum channels between $A_{1}$ and $A_{2}$. In the SymTFT picture, this is obtained by ``sweeping out" the defect. But this procedure acts trivially on the physical boundary. In pictures, this looks as follows.

\[\Psi \in \text{Ch}_B(A_1,A_2):=\quad
\diagram{1.4}{
\draw (0,-.5)--(3,-.5);
\draw (0,1)--(3,1);
\fill[YaleLightGrey] (0,-.5) rectangle  (3,1);
\draw[trivial] (1.5,-.5)--(1.5,1);
\draw[trivial] (-1.5,-.5)node[below]{$B$}--(-1.5,1)node[above]{$(A_1,A_2)$};
\node at (.75,-.8) {$B$};
\node at (2.25,-.8) {$B$};
\node at (.75,1.3) {$A_1$};
\node at (2.25,1.3) {$A_2$};
\node at (1.5,-.8) {$\id_B$};
\fill(1.5,-.5) circle [radius = .05];
\fill(1.5,1) circle [radius = .05];
\fill(-1.5,-.5) circle [radius = .05];
\fill(-1.5,1) circle [radius = .05];
\node at (-.75,.25){$\longrightarrow$};
\node at (-.75,.5){\small$\text{time}$};
}
\]

To capture this idea in operator algebra language, we have the following definition.

\begin{defn} Let $B$ be an abstract spin system and $B\subseteq A_{1}$ and $B\subseteq A_{2}$ local extensions. A bimodule channel between these extensions is a completely positive map $\Psi:A_{1}\rightarrow A_{2}$ such that $\Psi|_{B}=\text{Id}_{B}$. We denote the set of bimodule channels $\text{Ch}_{B}(A_{1},A_{2})$.
\end{defn}

We note that as before, this implies $\Psi$ is $B$-$B$ bimodular, and automatically unital. The collection of topological boundary extensions and bimodule channels extends to a category, defined as follows.

\begin{defn} Fix a physical boundary algebra $B$, and define category $\text{T}\partial(B)$ as follows: 

\begin{itemize} 
\item Objects are all topological boundary extensions of $B$.

\item The set of morphisms between extensions $A_{1}$ and $A_{2}$ is given by the set $\text{Ch}_{B}(A_{1},A_{2})$, and composition is just composition of channels.
\end{itemize}

\end{defn}

We note that $\text{T}\partial(B)$ can be viewed as enriched over the appropriate category of convex spaces. Indeed, there is an obvious convex structure on $\text{Ch}_{B}(A_{1},A_{2})$, and this composition is ``bilinear".

Our goal is to show that this formally defined category of cp maps ``matches" with the intuitive picture of topological defects between topological boundary conditions.

Notice that a bimodule channel is an actual morphism $\Psi: A_{1}\rightarrow A_2$ \textit{in the linear category} $\text{DHR}(B)$. The question is, which abstract morphisms $\psi: A_{1}\rightarrow A_{2}$ in $\text{DHR}(B)$ correspond to bimodule channels?

Let $L_{1}, L_{2}$ be commutative Q-systems in a unitary braided tensor category. We recall the \textit{convolution algebra} associated to these, as in \cite{10.21468/SciPostPhys.15.3.076}, see also \cite{BJ21,LCLJ}. Consider the finite dimensional vector space $\text{Hom}_{\mathcal{C}}(L_{1}, L_{2})$, with convolution product

$$f* g:=m\circ (f\otimes g)\circ m^{\dagger}\in\text{Hom}_{\mathcal{C}}(L_1, L_2).$$

This is a commutative C$^\star$-algebra with $*$-operation denote by $\#$ and defined as $f^{\#}:=\overline{f}$:
\[
f^{\#} := \quad
\diagram{.8}{
\draw(0,-1)--(0,1);
\node[draw = black, fill = white] at (0,0) {$f^\dagger$};
\draw(0,-1)--(.5,-1.5)--(1,-1)--(1,1.5);
\draw(0,1)--(-.5,1.5)--(-1,1)--(-1,-1.5);
\filldraw[fill=white,thick] (.5,-1.8) circle [radius=.05];
\draw(.5,-1.5)--(.5,-1.8);
\filldraw[fill=white] (-.5,1.8) circle [radius=.05];
\draw(-.5,1.5)--(-.5,1.8);
}
\]
We denote this C$^\star$-algebra as $H(L_{1},L_{2})$.

\begin{prop}
Let $B$ be a physical boundary algebra with topological boundary extensions $B\subseteq A_{1}, A_{2}$ and let $\Psi:A_{1}\rightarrow A_{2}$ be a $B$-$B$ bimodule intertwiner. Then $\Psi$ is completely positive if and only if $\Psi$ is a positive element in the C$^\star$-algebra $H(L_{1},L_{2})$. 
\end{prop}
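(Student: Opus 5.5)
Under the hypotheses of the proposition, $A_{1},A_{2}$ are Lagrangian Q-systems $L_{1},L_{2}$ in $\text{DHR}(B)\cong\mathcal{Z}(\mathcal{C})$, realized as the objects underlying the extensions $B\subseteq A_i$ with conditional expectations $E_i$. I will identify $B$-$B$ bimodule intertwiners $\Psi:A_1\to A_2$ with $\text{Hom}_{\text{DHR}(B)}(L_1,L_2)$, and then compare two positive cones: complete positivity of the map $\Psi$, and positivity in the convolution C$^\star$-algebra $H(L_1,L_2)$. My plan is to reduce both conditions to the same statement about a single morphism in the category.

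For the first step I pass from maps to bimodules. Complete positivity of a $B$-bimodular map $\Psi:A_1\to A_2$ is equivalent to positivity of the associated element of the relative commutant picture. Concretely, I form the $B$-$B$ correspondence $A_1\boxtimes_B \overline{A_2}$, or equivalently the Q-system $L_1\otimes \overline{L_2}\cong L_1\otimes L_2$ (Lagrangian Q-systems are self-dual). Via Frobenius reciprocity, $\text{Hom}(L_1,L_2)$ is isomorphic to $\text{Hom}(\mathbf{1}, \overline{L_1}\otimes L_2)$. I will then use the standard Stinespring/Paschke-type fact for finite-index bimodular maps between Q-system extensions: a bimodular map $A_1\to A_2$ is cp if and only if it has the form $\Psi(a)=v^{*}(a\otimes 1)v$, where $v$ is a bimodular intertwiner $A_2\to A_1\boxtimes_B A_2$ or $A_2\to L_1\otimes L_2$. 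This is the diagrammatic statement that $\Psi$ equals $m\circ(f\otimes f')\circ m^\dagger$-type expressions built from a morphism and its dagger. Local finiteness and the DHR finite projective bases (the theorem above) guarantee that all these correspondences are right finite, so a Pimsner--Popa/Stinespring dilation inside $\text{DHR}(B)$ exists.

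The second step computes the convolution product $g^{\#}*g$ diagrammatically. Using commutativity of the $L_i$ and the Frobenius relations, I will rewrite $g^{\#}*g$ as a morphism of the form $m_2\circ(\cdot)\circ m_1^{\dagger}$, sandwiching $\bar g^\dagger\otimes g$. I then show that this is exactly the map $a\mapsto w^{*}(a\otimes 1)w$ with $w$ built from $g$. Here the braiding enters: commutativity $m\circ c=m$ is what makes the convolution algebra commutative and lets the Stinespring picture close up. Conversely, given a Stinespring form $v$, I decompose $L_1\otimes L_2$, or more precisely the relevant intertwiner space, to write $\Psi$ as a sum $\sum_k g_k^{\#}*g_k$. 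Since $H(L_1,L_2)$ is a finite-dimensional commutative C$^\star$-algebra, positive elements are exactly finite sums of such squares, and this gives both directions.

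I expect the main obstacle to be the converse direction: showing that every cp bimodular $\Psi$ decomposes as a sum of convolution squares. This needs a careful identification of the minimal Stinespring space of $\Psi$ with a sub-correspondence of $A_1\boxtimes_B A_2$ that lies in $\text{DHR}(B)$, so that the dilating isometry is a categorical morphism. I would try to get this from $L^{2}(\Psi)$: it sits inside $A_1\boxtimes_B A_2$, and that object is DHR because it is generated by localized vectors. Then semisimplicity decomposes it into simple summands, each contributing one $g_k$. An alternative is to use the characters of the commutative algebra $H(L_1,L_2)$, namely its minimal projections, and check positivity of $\Psi$ on each one. This works because the minimal projections correspond to the simple summands of $L_1\otimes L_2$ that are algebra maps.
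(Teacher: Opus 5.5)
Your outline is correct, but it takes a different route from the paper. The paper does no computation here. It identifies $B$-bimodular cp maps $A_1\to A_2$ with cp-multipliers between the categories $\mathcal{M}_1,\mathcal{M}_2$ of $B$-$A_1$ and $B$-$A_2$ correspondences (in the Jones--Penneys sense). It then quotes the known theorem that such cp-multipliers are exactly the positive elements of $H(L_1,L_2)$.

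You instead prove the equivalence directly, by a Stinespring/Fourier argument in $A_1\boxtimes_B A_2$, and your key computation does close up. Write $m_1^\dagger(1)=\sum_i b_i\otimes b_i^*$ for a Pimsner--Popa basis of $E_1$ (up to normalization), and realize $\#$ as $g^\#(x)=g(x^*)^*$. Then the vector $\xi_g:=(1\otimes g)(m_1^\dagger(1))$ is $B$-central, and $(g^\#*g)(a)=\sum_i g(b_i^*a^*)^*g(b_i^*)=\langle \xi_g\,|\,a\,\xi_g\rangle_{A_2}$. Frobenius reciprocity makes $g\mapsto\xi_g$ a bijection onto the $B$-central vectors.

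Two corrections. First, contrary to your remark, the braiding and the commutativity of the $L_i$ play no role in this identification; they only make $H(L_1,L_2)$ commutative. Second, the step you flag as the main obstacle is governed by finite index, not by ``generated by localized vectors''. A priori $L^2(\Psi)$ is only the dense image of $A_1\boxtimes_B A_2$ under $a_1\otimes a_2\mapsto a_1\Omega a_2$, which is bounded by the Pimsner--Popa inequality.

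You can avoid $L^2(\Psi)$ altogether. Complete positivity of $\Psi$ is literally positivity of the $A_2$-valued form $a_2^*\Psi(a_1^*a_1')a_2'$ on $A_1\boxtimes_B A_2$. This form is implemented by the bimodular operator $T_\Psi(a_1\otimes a_2)=a_1\xi_\Psi a_2$, where $\xi_\Psi:=(1\otimes\Psi)(m_1^\dagger(1))$. Since $T_{g^\#*g}=T_g^*T_g$ and every bimodular endomorphism is some $T_g$, you get: $\Psi$ is cp $\iff T_\Psi\ge 0\iff \Psi=g^\#*g$, with $T_g=T_\Psi^{1/2}$.

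The paper's route is shorter, and it ties the statement to the cp-multiplier/module-functor framework it uses next for the extreme points. Yours is self-contained and exhibits the Stinespring vector explicitly.
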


\begin{proof}
Let $\mathcal{M}_{1}$ and $\mathcal{M}_{2}$ be the unitary categories of $B$-$A_{1}$ and $B$-$A_{2}$ correspondences that forget to $\text{DHR}(B)$ as $B$-$B$ correspondences. Then $B$-$B$ bimodular cp-maps $\Psi:A_{1}\rightarrow A_{2}$ are in bijective correspondence with cp-multipliers between $\mathcal{M}_{1}$ and $\mathcal{M}_{2}$ in the sense of \cite{MR3687214}, which by \cite{HP23, LCLJ, 10.1063/5.0071215} correspond to positive elements in the C$^\star$-algebra $H(L_{1},L_{2})$. 
\end{proof}

In the above statement, we gave a criteria for determining if a morphism is cp, whereas the morphisms in $\text{T}\partial(B)$ are unital cp maps. However, every non-zero $B$-bimodular cp-multiplier $\Psi$ is almost unital, in the sense that $\Psi(1_{A_1})=\lambda 1_{A_2}$ for some strictly positive scalar $\lambda$. Indeed, the irreducibility of the inclusions ($B^{\prime}\cap A_{i})=\mathbbm{C}1_{A_i}$ implies both $A_{1}$ and $A_{2}$ as $B$-$B$ bimodules contain $B$ with multiplicity $1$. Since $\Psi$ is $B$-bimodular, this implies $\Psi(1_{A_{1}})=\lambda 1_{A_{2}}$ for some scalar $\lambda$. To see this is non-zero, note that since $\Psi$ is cp, $\Psi(1_{A_{1}})=p\in A_{2}$ is a non-zero positive element of the C$^\star$-algebra $A_{2}$ if and only if $\Psi$ is non-zero.

Now, since finite-dimensional commutative C$^\star$-algebras are isomorphic to $\mathbbm{C}^{n}$, there exists a basis of orthogonal minimal projections $p_{i} \in H(L_{1}, L_{2})$ such that 

$$p_{i}*p_{j}=\delta_{i,j} p_{i}$$
$$p^{\#}_{i}=p_i$$
$$i\circ i^{\dagger}=\sum p_{i}$$

The equation follows from that fact that $i\circ i^{\dagger}$ is the unit for $H(L_{1},L_{2})$.

For each $i$, there is some non-zero, positive scalar $\lambda_i$ such that $p_{i}(1)=\lambda_{i}1$. We then define the channel

$$\Psi_{i}:=\frac{p_{i}}{\lambda_i}.$$

The next proposition shows that the convex space $\text{Ch}_{B}(A_{1},A_{2})$ is actually a simplex, meaning that there are finitely many extreme points and every element is a uniquely specifiable as a convex combination of extreme points.

\begin{theorem}
    If $B\subseteq A_{1}$ and $B\subseteq A_{2}$ are local extensions of an abstract spin system, then $\text{Ch}_{B}(A_{1},A_{2})$ is a simplex with extreme points given by $\{\Psi_{i}\}$ described above.
\end{theorem}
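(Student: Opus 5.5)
The plan is to identify $\text{Ch}_{B}(A_{1},A_{2})$, as a convex set, with the state space of the finite-dimensional commutative C$^\star$-algebra $H(L_{1},L_{2})$, and then use that such a state space is a simplex.

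\textbf{Step 1: channels are normalized positive elements.} By the preceding Proposition, $B$-$B$ bimodular cp maps $A_{1}\rightarrow A_{2}$ are exactly the positive elements of $H(L_{1},L_{2})$. The cone of positive elements is $\{\sum_i c_i p_i : c_i\ge 0\}$, since $H(L_{1},L_{2})\cong \mathbbm{C}^{n}$ with minimal projections $p_i$. A bimodule channel is a cp map with $\Psi(1)=1$. I would first check that for a bimodular map, $\Psi|_{B}=\text{Id}_{B}$ is equivalent to $\Psi(1_{A_1})=1_{A_2}$. Indeed, $\Psi(b)=\Psi(1\cdot b)=\Psi(1)b$. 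Conversely, a cp map restricting to the identity on $B$ has $B$ in its multiplicative domain and is therefore bimodular. So $\text{Ch}_{B}(A_{1},A_{2})$ is the set of positive elements $\Psi$ with $\Psi(1)=1$.

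\textbf{Step 2: the normalization is a faithful positive linear functional.} I would define $\omega:H(L_{1},L_{2})\rightarrow\mathbbm{C}$ by $\Psi(1_{A_1})=\omega(\Psi)1_{A_2}$. This is well defined because $A_{i}$ contains $B$ with multiplicity one as a $B$-$B$ bimodule (irreducibility), and any bimodular map sends $1$ into $B'\cap A_{2}=\mathbbm{C}$. It is linear, and $\omega(p_i)=\lambda_i>0$ by the remark preceding the theorem. Therefore
$$\text{Ch}_{B}(A_{1},A_{2})=\Big\{\sum_i c_i p_i: c_i\ge 0,\ \sum_i c_i\lambda_i=1\Big\}=\Big\{\sum_i t_i\Psi_i: t_i\ge0,\ \sum_i t_i=1\Big\},$$
with $t_i=c_i\lambda_i$.

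\textbf{Step 3: uniqueness of coordinates and extreme points.} Since the $p_i$ are linearly independent (orthogonal idempotents under $*$), the $\Psi_i$ are linearly independent. Hence the coefficients $t_i$ are unique, and the set is affinely isomorphic to the standard $(n-1)$-simplex. The extreme points are exactly the vertices $\Psi_i$: a vertex cannot be written nontrivially as a convex combination, because the coordinates are unique and nonnegative.

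\textbf{Main obstacle.} The only delicate point is Step 1, namely the precise dictionary. The cp maps from the Proposition are abstract morphisms in $\text{DHR}(B)$, and I need positivity in $H(L_1,L_2)$ (the convolution order) to match the cone generated by the $p_i$. I also need the map $\Psi\mapsto\Psi(1)$ to be the functional $\omega$ above. I would handle this by noting that in $\mathbbm{C}^n$ the positive elements are exactly the nonnegative combinations of minimal projections, and that $\Psi(1_{A_1})$ is computed by precomposing with the unit $i:\mathbf{1}\rightarrow L_1$, which is linear in $\Psi$.
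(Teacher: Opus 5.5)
Your proposal is correct and follows essentially the same route as the paper: expand a channel in the basis of minimal convolution projections $p_i$ (equivalently the normalized $\Psi_i$) and use unitality to force the coefficients to sum to one. Your version is slightly more complete, since you make explicit two points the paper leaves implicit: positivity in $H(L_1,L_2)\cong\mathbb{C}^n$ forces the coefficients to be nonnegative, and the $\Psi_i$ are exactly the extreme points.
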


\begin{proof}
Since the $p_{i}$ are bases for the whole convolution C$^\star$-algebra $H(A_{1},A_{2})$, the B-bimodular ucp maps $\Psi:A_{1}\rightarrow A_{2}$ can be uniquely written written as $\Psi=\sum_{i} \beta_{i} \Psi_{i}$, but since the $\Psi_{i}$ are unital, $\Psi(1)=1$ implies $\sum_{i} \beta_{i}=1$, hence $\Psi$ is a convex combination.
\end{proof}


Now we will demonstrate how to upgrade results from \cite{BJ21}, to allow us to completely understand cp maps between Lagrangian algebras. Recall the canonical correspondence between Lagrangian algebras $L\in \mathcal{Z}(\mathcal{D})$ and indecomposable right $\mathcal{D}$-module categories. Given a Lagrangian algebra, we write the corresponding $\mathcal{D}$-module category as $\mathcal{M}_{L}$.

The collection of right module categories of $\mathcal{D}$ assembles into a unitary 2-category, whose 1-morphisms are $\star$-module functors, and morphisms are natural transformations. We will turn this into a unitary multi-fusion category as follows: pick a representative of each indecomposable module category $\mathcal{M}_{i}$ and consider the right unitary module category $\mathcal{M}:=\bigoplus_{i} \mathcal{M}_{i}$. For convenience we set $\mathcal{M}_{1}=\mathcal{D}$ as a right $\mathcal{D}$-module category. 

Then define the indecomposable unitary multi-fusion category 

$$\widetilde{\cD}:=\text{End}_{\mathcal{D}}(\mathcal{M})\,.$$

$\widetilde{\mathcal{D}}$ is an indecomposable unitary multifusion category, where the blocks are indexed by the module categories and the blocks are given by $$\mathcal{D}_{ij}:=\text{End}_{\mathcal{D}}(\mathcal{M}_{j}, \mathcal{M}_{i})\,.$$

The $\mathcal{D}_{ij}$ are invertible $\mathcal{D}_{ii}-\mathcal{D}_{jj}$ bimodule categories. Furthermore, We have 

$$\mathcal{Z}(\mathcal{D})\cong \mathcal{Z}(\widetilde{\mathcal{D}})\cong \mathcal{Z}(\mathcal{D}_{ii})\,.$$

Now let $F_{i}:\mathcal{Z}(\mathcal{D})\rightarrow \mathcal{Z}(\mathcal{D}_{ii})$ be the composition of the equivalence $\mathcal{Z}(\mathcal{D})=\mathcal{Z}(\mathcal{D}_{11})\cong \mathcal{Z}(\mathcal{D}_{ii})$ with the forgetful functor to $\mathcal{D}_{ii}$. Let $I_{i}:\mathcal{D}_{ii}\rightarrow \mathcal{Z}(\mathcal{D})$ be its (bi)-adjoint.

Then there are exactly $n$ isomorphism classes of Lagrangian algebras in $\mathcal{Z}(\mathcal{D})$, given by $\{A_{i}:=I_{i}(\mathbbm{1}_{\mathcal{D}_{ii}})\}^{n}_{i=1}$, with $A_{1}$ being the canonical Lagrangian algebra in $\mathcal{Z}(\mathcal{D})$. We can express $A_{i}$ as the pair $(\bigoplus_{Y\in \mathcal{D}_{1j}} Y\otimes \overline{Y}, \psi_{A_{i}})$ where

$$\psi_{A_{i},W}:= 
   \bigoplus_{Y,Z\in\text{Irr}(\mathcal{D}_{1i})}
    \sum_i \frac{\sqrt{d_Z}}{\sqrt{d_Y}}
    \tikzmath{
        \draw[thick]
        (0,0) node [below] {$\scriptstyle Y$}--(0,4)node [above] {$\scriptstyle Z$ }
        (0,2) to [in=270 ,out=110] (-1,4) node [above] {$\scriptstyle W$}
        (2,0)node [below] {$\scriptstyle \bar Y$ }--(2,4) node [above] {$\scriptstyle \bar Z$ }
        (2,2) to [out=290, in=90] (3,0) node [below] {$\scriptstyle W$}
        ;
        \mydotw{(0,2)}
        \mydotw{(2,2)}
        \node at (1,2){$\scriptstyle i$};
        \node at (0,2) [left] {$\alpha$};
        \node at (2,2) [right] {$\alpha^{\bullet}$};}\,.
        $$

Here $i$ is labelling the region, meaning we project onto the $i^{th}$ summand of $\mathbbm{1}$ in $\mathcal{D}$ (we assume a blank label indicates the region is labelled by $1$). $\{\alpha\}$ is a basis for $\text{Hom}_{\cD_{1i}}(Y, W\otimes Z)$ with $\beta^{\dagger}\circ \alpha=\delta_{\alpha,\beta} 1_{Y}$, and $\{\alpha^{\bullet}\}\subseteq \mathcal{D}_{i1}(\bar{Y}\otimes W, \bar{Z})$

$$\alpha^{\bullet}:=(\text{ev}_{Y} \otimes 1_{\overline{Z}}) \circ  \alpha^{\dagger} \circ (1_{\overline{Y}}\otimes 1_{W}\otimes \text{coev}_{Z})\,.$$

Each object $A_{i}$ is canonically endowed with the structure of a Q-system in $\cZ(\mathcal{D})$, with structure maps
\begin{align*}
    \tikzmath{
        \draw[very thick]
            (0,.5) node [below] {$\scriptstyle L_{i}$ } --(0,1) arc (180:0:1) --(2,.5) node [below] {$\scriptstyle L_{i}$ } 
            (1,2)--(1,4) node [above]
            {$\scriptstyle L_{i}$ }
            ;            
            \mydotw{(1,2)}
    }
    &=
    \bigoplus_{X\in\text{Irr}(\mathcal{D}_{1i})}
    \frac{1}{\sqrt{d_{X}}}
    \tikzmath{
        \draw[thick] 
            (0,0.5) node [below] {$\scriptstyle \bar X$ } -- (0,1) arc (180:0:1) --(2,.5)
            node [below] {$\scriptstyle X$ }
            (-1,.5) node [below] {$\scriptstyle  X$} -- (-1,1) arc (180:135:2)
            to [in=-90,out=45]
            (.5,3.5) --(.5,4) node [above] {$\scriptstyle X$}
              (3,.5) node [below] {$\scriptstyle  \bar X$}  -- (3,1) arc (0:45:2) to [in=-90,out=135] (1.5,3.5)--(1.5,4) node [above] {$\scriptstyle \bar  X$}
            ;\node at (1,3){$\scriptstyle i$};}
    \,,
    \\
     \tikzmath{
    \draw[very thick]
    (0,0) to (0,2) node [above] {$\scriptstyle L_{i}$};
    \mydotw{(0,0)};
    }
    &=
    \bigoplus_{ X\in \text{Irr}(\mathcal{D}_{1i})} \sqrt{d_{X}}\tikzmath{
    \draw[thick]
    (2,1)node [above] {$\scriptstyle \bar{X}$} --(2,0) 
    arc(360:180:1) 
    (0,0)--(0,1) node [above] {$\scriptstyle X$};
    \node at (1,0.5){$\scriptstyle i$};\,.
    }
\end{align*}
The comultiplication and counit are given by the reflected diagrams of the multiplication and unit maps respectively, with the same normalizing coefficients. Thus $A_{i}$ is a connected Q-system in $\mathcal{Z}(\mathcal{D})$.
From above we see
$$
    \mathcal{Z}(\mathcal{D})(A_{i}, A_{j})\cong \bigoplus_{X\in\text{Irr}(\mathcal{D}_{ij})} \mathcal{D}_{ij}(\mathbb{1}, X\otimes \bar{X})\,.
$$

Now for $Y\in \text{Irr}(\mathcal{D}_{ij})$, define
\begin{equation}\label{eYdefinition}
    \Phi_Y=
    \bigoplus_{X\in \text{Irr}(\mathcal{D}_{1j}), Z\in \text{Irr}(\mathcal{D}_{1i})}
    \sum_i \frac{\sqrt{d_{X}}\sqrt{d_{Z}}}{d_{Y}}
    \tikzmath{
        \draw[thick]
        (0,0) node [below] {$\scriptstyle  X$ }--(0,4)node [above] {$\scriptstyle Z$ }
        (0,1) to [in=260,out=80] 
        node [above] {$\scriptstyle Y$}
        (2,3)
        (2,0)node [below] {$\scriptstyle \bar X$ }--(2,4) node [above] {$\scriptstyle \bar Z$ }
        ;
        \mydotw{(0,1)}
        \mydotw{(2,3)}
         \node at (0,1) [right] {$\scriptstyle \alpha^{\bullet}$};
        \node at (2,3) [right] {$\scriptstyle \alpha$};
    }\,
\end{equation}
where $\alpha$ ranges over a basis for $\widetilde{\cD}(Y\otimes \bar{X},\bar{Z})$. It is easy to verify that $\Phi_{Y}\in \mathcal{Z}(\mathcal{D})(A_{j}, A_{i})$.  Comparing this with $\cite{BJ21}$, we note that our $\Phi_{Y}=\frac{1}{d^{2}_{Y}} e_{Y}$ in their notation. With this dictionary in mind, then following the calculations in \cite[Section 3.1]{BJ21} we have the following.

\begin{theorem}

\begin{enumerate}
    \item 
    $\{\Phi_{Y}\}_{Y\in \text{Irr}(\mathcal{D}_{ij})}$ forms a linear basis for $\mathcal{Z}(\mathcal{D})(L_{j},L_{i})$.
    \item
    For $Y,Z\in \text{Irr}(\mathcal{D}_{ij})$, $\Phi_{Y}\ast \Phi_{Z}=\delta_{Y,Z} \frac{d^{2}_{X}}{d^{2}_{Y}d^{2}_{Z}} \Phi_{Y}$.
    \item
    For $Y\in \text{Irr}(\mathcal{D}_{ij})$ and $Z\in \text{Irr}(\mathcal{D}_{jk})$ $\Phi_{Y}\circ \Phi_{Z}=\sum_{X\in \text{Irr}(\mathcal{D}_{ik})} \frac{d_{X}}{d_{Y}d_{Z}} N^{X}_{YZ} \Phi_{X}$. 
\end{enumerate}

\end{theorem}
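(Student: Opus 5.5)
The plan is to reduce each assertion to graphical calculus in the multifusion category $\widetilde{\mathcal{D}}$. We use the explicit description $L_{i}=A_{i}=\bigoplus_{X\in\text{Irr}(\mathcal{D}_{1i})} X\otimes\bar X$ together with its half-braiding $\psi_{A_i}$ and Q-system structure, and we track how the normalization $\Phi_Y=\frac{1}{d_Y^2}e_Y$ transports the computations of \cite[Section 3.1]{BJ21}.

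\textbf{Part (1).} We already have the identification
$$\mathcal{Z}(\mathcal{D})(A_{j},A_{i})\cong \bigoplus_{X\in\text{Irr}(\mathcal{D}_{ij})}\mathcal{D}_{ij}(\mathbb{1},X\otimes\bar X),$$
which comes from the adjunction $I_j\dashv F_j$ and the equivalence $\mathcal{Z}(\mathcal{D})\cong\mathcal{Z}(\mathcal{D}_{ii})$. Each summand is one-dimensional (the coevaluation), so the dimension equals $|\text{Irr}(\mathcal{D}_{ij})|$.

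\begin{enumerate}
\item We check that $\Phi_Y$ commutes with the half-braidings. This reduces to the fact that $\sum_\alpha \alpha\otimes\alpha^\bullet$ is basis-independent, so a $W$-strand can be slid across the $Y$-strand using completeness of the bases in $\text{Hom}(Y\otimes\bar X,\bar Z)$.
\item Under the adjunction, $\Phi_Y$ is sent to a nonzero multiple of $\text{coev}_Y$ in the $Y$-summand and to zero elsewhere. This gives linear independence, and by the dimension count a basis.
\end{enumerate}

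\textbf{Part (2).} We expand $\Phi_Y*\Phi_Z=m\circ(\Phi_Y\otimes\Phi_Z)\circ m^\dagger$ using the formulas for $m$ and $m^\dagger$. The comultiplication splits $X\otimes\bar X$ into $X\otimes\bar X\otimes X\otimes\bar X$, and the two middle strands are capped in region $j$. After applying $\Phi_Y$ and $\Phi_Z$, the multiplication joins the inner $\bar Z$ and $Z'$ strands. This produces a closed loop in which the $Y$- and $Z$-strands are connected through a fusion into the identity channel. By Schur's lemma in $\mathcal{D}_{ij}$ (equivalently, semisimplicity of $\mathcal{D}_{ji}\otimes\mathcal{D}_{ij}\to\mathcal{D}_{jj}$), the result vanishes unless $Y\cong Z$. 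When $Y=Z$, the loop evaluates to a scalar times $\Phi_Y$. The scalar is then read off by comparing normalizations: the $\sqrt{d}$ factors from $m$ and $m^\dagger$, the factor $\frac{\sqrt{d_X d_Z}}{d_Y}$ in $\Phi_Y$, and the loop values. This is exactly the statement in \cite{BJ21} that the $e_Y$ are (scaled) orthogonal idempotents, rescaled by $d_Y^{-2}$.

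\textbf{Part (3).} For the composition $\Phi_Y\circ\Phi_Z$, we stack the two diagrams. The middle $X'\otimes\bar X'$ with $X'\in\text{Irr}(\mathcal{D}_{1j})$ is summed with the basis pairs $\alpha^\bullet,\alpha$ of both operators. Summing over $X'$ and the bases, via the completeness relation, merges the $Y$- and $Z$-strands into a parallel pair $Y\otimes Z$ running from left to right. We then insert the fusion decomposition
$$1_{Y\otimes Z}=\sum_{X\in\text{Irr}(\mathcal{D}_{ik})}\sum_\gamma \gamma\gamma^\dagger,$$
which turns each term into a diagram of the shape of $\Phi_X$, counted with multiplicity $N^X_{YZ}$. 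Collecting the normalizations $\frac{\sqrt{d}\sqrt{d}}{d_Y}$ and $\frac{\sqrt{d}\sqrt{d}}{d_Z}$ against $\frac{\sqrt{d}\sqrt{d}}{d_X}$ gives the coefficient $\frac{d_X}{d_Y d_Z}$.

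The main obstacle is the bookkeeping of scalars, particularly in (2). The normalizations of $m$, of $\alpha^\bullet$ versus $\alpha$, and of the loop values in the correct blocks of $\widetilde{\mathcal{D}}$ must all be tracked consistently. The plan is to fix sphericality conventions and check the result against the BJ21 formula $e_Y*e_Z=\delta_{Y,Z}\,(\cdot)\,e_Y$ under $\Phi_Y=d_Y^{-2}e_Y$, rather than recompute from scratch.
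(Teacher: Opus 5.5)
Your proposal follows the same route as the paper. The paper proves this statement only by transporting the computations of \cite[Section 3.1]{BJ21} through the dictionary $\Phi_Y=d_Y^{-2}e_Y$, and your sketches (dimension count plus independence, collapsing the inner bubble by Schur's lemma, and completeness plus the fusion decomposition of $Y\otimes Z$) are what those computations amount to.

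One caution for the bookkeeping in (2): the printed coefficient $\frac{d_X^2}{d_Y^2d_Z^2}$ contains an unbound index $X$. With the paper's normalizations of $m$ and $\eta$ and unital $\Phi_Y$, $d_Y^2\Phi_Y$ is the minimal convolution idempotent. So the scalar you will actually obtain is $\delta_{Y,Z}\,d_Y^{-2}$, i.e.\ the printed formula with $d_X$ replaced by $d_Y$.
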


\begin{prop}
Let $B\subseteq A_{1}$ and $B\subseteq A_{2}$ be topological bounday extensions.

\begin{enumerate}
    \item 
There is a canonical bijective assignment $\text{Fun}_{\mathcal{C}}(\mathcal{M}_{A_{1}}, \mathcal{M}_{A_{2}})\rightarrow \text{Ch}_{B}(A_{1}, A_{2})$, $X\mapsto \Phi_{X}$ between the extreme points $\text{Ch}_{B}(A_{1}, A_{2})$ and equivalence classes of simple $\mathcal{C}$-module functors in $\text{Fun}_{\mathcal{C}}(\mathcal{M}_{A_{1}}, \mathcal{M}_{A_{2}})$.
\item 
If $A_{1}$, $A_{2}$ and $A_{3}$ are Lagrangian algebras, let $X\in \text{Fun}_\cC(\mathcal{M}_{A_1},\mathcal{M}_{A_2}),\ Y\in \text{Fun}_\cC(\mathcal{M}_{A_2},\mathcal{M}_{A_3})$ and $\Phi_{X}\in \text{Ch}_{B}(A_{1},A_{2}), \Phi_{Y}\in \text{Ch}_{B}(A_{2},A_{3})$ the associated channels. Then $$\Phi_{X}\circ \Phi_{Y}=\sum_{Z\in \text{Irr}\left(\text{Fun}_{\mathcal{C}}(\mathcal{M}_{A_{1}},\mathcal{M}_{A_3})\right)} \frac{d_{Z}}{d_{X}d_{Y}} N^{Z}_{X,Y} \Phi_{Z}$$

where $N^{Z}_{X,Y}$ denotes the fusion rule for module functors.
\end{enumerate}
\end{prop}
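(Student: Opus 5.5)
The plan is to reduce both parts to the preceding Theorem on the basis $\{\Phi_Y\}$, after translating from the $\mathcal{C}$-side to the $\mathcal{D}$-side. First, the data must be placed inside $\text{DHR}(B)\cong\mathcal{Z}(\mathcal{D})$, where $\mathcal{D}=\text{DHR}_{-}(B)$ is the charge category. The topological boundary extensions $A_1,A_2$ are Lagrangian Q-systems in $\text{DHR}(B)$, so up to isomorphism they are among the $L_i=I_i(\mathbb{1}_{\mathcal{D}_{ii}})$, say $A_1\cong L_i$ and $A_2\cong L_j$. Under this identification the module categories $\mathcal{M}_{A_k}$ are the blocks $\mathcal{M}_i,\mathcal{M}_j$. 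The key identification is then
\[
\text{Fun}_{\mathcal{C}}(\mathcal{M}_{A_1},\mathcal{M}_{A_2})\simeq \mathcal{D}_{ij}\ \ (\text{or }\mathcal{D}_{ji}).
\]
This should follow from Morita equivalence: $\mathcal{C}$ is realized as the category of $A$-modules internal to $\text{DHR}(B)$, and $\mathcal{C}\simeq\mathcal{D}^{\vee}_{\mathcal{M}}$. The 2-categories of $\mathcal{C}$-module and $\mathcal{D}$-module categories are therefore equivalent, and this equivalence preserves functor categories and their composition.

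Second, I would match the simplex structure. The preceding Theorem shows that the $\Phi_Y$, $Y\in\text{Irr}(\mathcal{D}_{ij})$, form a linear basis of $\mathcal{Z}(\mathcal{D})(L_j,L_i)$. It also shows that they are mutually orthogonal idempotents under convolution up to positive scalars. Hence each $\Phi_Y$ is a positive scalar multiple of one of the minimal projections $p_k$ of the commutative C$^\star$-algebra $H(L_1,L_2)$, and this gives a bijection between $\text{Irr}(\mathcal{D}_{ij})$ and the $p_k$. Positivity of the scalar would be checked by verifying $\Phi_Y^{\#}=\Phi_Y$ and $\Phi_Y*\Phi_Y=c\Phi_Y$ with $c>0$. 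By the earlier Proposition, $\Phi_Y$ is then completely positive. By the simplex Theorem for $\text{Ch}_B(A_1,A_2)$, the normalized channels $\Phi_Y/\lambda_Y$ are exactly the extreme points. Composing with the equivalence of the first step proves part 1.

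For part 2, I would use item 3 of the preceding Theorem: $\Phi_Y\circ\Phi_Z=\sum_X \frac{d_X}{d_Yd_Z}N^X_{YZ}\Phi_X$. Module-functor fusion corresponds to composition in $\widetilde{\mathcal{D}}$, so the coefficients $N$ and the dimensions $d$ transport unchanged. The formula as stated requires the $\Phi_X$ with this normalization to already be unital. I would check this with the unit computation: applying $\Phi_Y$ to the unit of $L_j$ produces a loop of $Y$ weighted by $\frac{\sqrt{d_X}\sqrt{d_Z}}{d_Y}$. This loop should evaluate to the unit of $L_i$, up to how the unit is normalized at each block. If it does, $\lambda_Y=1$, and the fusion formula is consistent because the convex coefficients sum to $1$ by $\sum_X d_XN^X_{YZ}=d_Yd_Z$. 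I would also make sure the order convention $\Phi_X\circ\Phi_Y$ matches the order of functor composition, and swap $i,j$ if needed.

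The main obstacle is the first step. It requires showing that the dictionary $A\mapsto\mathcal{M}_A$ between topological boundary extensions and $\mathcal{C}$-module categories is compatible with the $\mathcal{D}$-side Lagrangian dictionary in a way that respects composition and dimensions. I would approach it by passing through the $B$-$A$ correspondence categories used in the proof of the cp-criterion Proposition. Those categories are naturally $\text{DHR}(B)$-module categories, and their forgetful functors identify both sides with $\mathcal{Z}(\mathcal{D})$-module data.
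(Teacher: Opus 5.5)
Your proposal is correct and follows essentially the paper's route. The paper gives no separate proof: it treats the Proposition as an immediate consequence of the preceding Theorem, where the $\Phi_Y$, $Y\in\text{Irr}(\mathcal{D}_{ij})$, form a basis of positive multiples of the minimal convolution projections and the composition rule is imported from \cite{BJ21}, together with the simplex theorem for $\text{Ch}_B(A_1,A_2)$, leaving implicit the identification of $\text{Fun}_{\mathcal{C}}(\mathcal{M}_{A_1},\mathcal{M}_{A_2})$ with the relevant block of $\widetilde{\mathcal{D}}$. The points you flag are exactly what the paper glosses over and are worth keeping explicit: the Morita transport between $\mathcal{C}$- and $\mathcal{D}$-module functor categories, positivity of the convolution scalars, the normalization $\Phi_Y(1)=1$ up to block-dependent gauge, and the composition order in part 2.
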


\subsubsection{Basis decomposition}\label{sec:basis}

If we take a module fucntor $X:\mathcal{M}_{1}\rightarrow \mathcal{M}_{2}$, the cp map between the associated Lagrangian algebras $A_{1}$ and $A_{2}$ is denoted by $\Phi_{X}$ as above.

\[
\diagram{1.4}{
\draw[string] (0,-1)node[right]{$A_1$}--(0,0);
\draw[string] (0,0)--(0,1)node[right]{$A_2$};
\node[draw = black, fill = white] at (0,0) {$\Phi_X$};
}
\;\, \leftrightarrow \quad
\diagram{1.4}{
\draw[string] (0,-1)node[right]{$\cM_{A_1}$}--(0,0);
\draw[string] (0,0)--(0,1)node[right]{$\cM_{A_2}$};
\node[draw = black, fill = white] at (0,0) {$X$};
}\,.
\]
Given that $A$ is a commutative Q-system in $\DHR(B) \cong_{br} \cZ(\cD)$, for $X \in \cD$, we can write down the local operators explicitly (for example, see \cite{jones2025quantumcellularautomatacategorical}):
\[
\raisebox{-11 pt}{
\begin{tikzpicture}[baseline={($(current bounding box)$)}]
        \draw[string] (0,-1.2) --  (0,-.3);
        \draw[string] (0,.3) --  (0,1.2);
        \filldraw[fill=white] (-.3,-.3) rectangle node{$f$} (2.1,.3);
        \draw[string] (.6,-1.2) --  (.6,-.3);
        \draw[string] (.6,.3) --  (.6,1.2);
        \draw[string] (-.6,-1.2) --  (-.6,1.2);
        \draw[string] (2.3,.-1.2) --  (2.3,1.2);
        \node at (1.3,.6) {$\cdots$};
        \node at (1.3,-.6) {$\cdots$};
        \node at (1,-1.5){$X^{\otimes I}$};
        \node at (1,-2.3){$\text{symmetric local operators in }B$};
        \draw[string] (1.7,-1.2) --  (1.7,-.3);
        \draw[string] (1.7,.3) --  (1.7,1.2);
\end{tikzpicture}} \quad \subseteq \quad \quad \quad \quad
\raisebox{-11 pt}{
\begin{tikzpicture}[baseline={($(current bounding box)$)}]
        \draw[string] (0,-1.2) --  (0,-.3);
        \draw[string] (0,.3) --  (0,1.2);
        \filldraw[fill=white] (-.3,-.3) rectangle node{$f$} (2.1,.3);
        \draw[string] (.6,-1.2) --  (.6,-.3);
        \draw[string] (.6,.3) --  (.6,1.2);
        \draw[string] (-.6,-1.2) --  (-.6,1.2);
        \draw[string] (2.3,.-1.2) --  (2.3,.6)node[right]{\small$\sigma$};
        \draw[string] (2.3,.8) --  (2.3,1.2);
        \node at (1.3,.6) {$\cdots$};
        \node at (1.3,-.6) {$\cdots$};
        \node at (1,-1.5){$X^{\otimes I}$};
        \draw[string] (1.7,-1.2) --  (1.7,-.3);
        \draw[string] (1.7,.3) --  (1.7,1.2);
        \draw[string, YaleMidBlue] (1.9,.3) -- (2.85,1.2)node[below right]{$A \in \cZ(\cD)$};
        \node at (1,-2.3){$\text{local operators in }A$};
\end{tikzpicture}}\,,
\]
where $\sigma_{A,X}: A \ot X \xrightarrow[]{\simeq} X \ot A $ is half-braiding in $\cZ(\cD)$.
Combining the bimodule channels with the local operators, we have
\[
\bigotimes_{I \subseteq \Z} M_d(\C)\;\; \cong \;\;
\raisebox{-11 pt}{
\diagram{1.2}{
\draw[string] (0,-1.2) --  (0,-.3);
        \draw[string] (0,.3) --  (0,1.2);
        \filldraw[fill=white] (-.3,-.3) rectangle node{$f$} (2.1,.3);
        \draw[string] (.6,-1.2) --  (.6,-.3);
        \draw[string] (.6,.3) --  (.6,1.2);
        \draw[string] (-.6,-1.2) --  (-.6,1.2);
        \draw[string] (2.3,.-1.2) --  (2.3,.6);
        \draw[string] (2.3,.8) --  (2.3,1.2);
        \node at (1.3,.6) {$\cdots$};
        \node at (1.3,-.6) {$\cdots$};
        \node at (1,-1.5){$X^{\otimes I}$};
        \draw[string] (1.7,-1.2) --  (1.7,-.3);
        \draw[string] (1.7,.3) --  (1.7,1.2);
        \draw[string, YaleMidBlue] (1.9,.3) -- (2.85,1.2)node[below right]{$A $};
}
}
 \mapsto\quad \quad \;\;
\raisebox{-11 pt}{
\diagram{1.2}{ 
\draw[string] (0,-1.2) --  (0,-.3);
        \draw[string] (0,.3) --  (0,1.2);
        \filldraw[fill=white] (-.3,-.3) rectangle node{$f$} (2.1,.3);
        \draw[string] (.6,-1.2) --  (.6,-.3);
        \draw[string] (.6,.3) --  (.6,1.2);
        \draw[string] (-.6,-1.2) --  (-.6,1.2);
        \draw[string] (2.3,.-1.2) --  (2.3,.45);
        \draw[string] (2.3,.95) --  (2.3,1.2);
        \node at (1.3,.6) {$\cdots$};
        \node at (1.3,-.6) {$\cdots$};
        \node at (1,-1.5){$X^{\otimes I}$};
        \draw[string] (1.7,-1.2) --  (1.7,-.3);
        \draw[string] (1.7,.3) --  (1.7,1.2);
        \draw[string, YaleMidBlue] (2.4,.8) -- (2.85,1.2)node[below right]{$A $};
        \draw[string, YaleMidBlue] (1.9,.3) -- (2.15,.55);
        \node[draw = YaleMidBlue, fill = white, text = YaleMidBlue] at (2.3,.7) {\small$\Phi_X$};
}}\,.
\]
See \eqref{eYdefinition} for the explicit formula of the basis $\Phi_X$.

\subsection{Parameterizing duality operators }\label{sec:parametrization}

Let $\alpha\in \text{QCA}(B)$. Then by \cite{jones2024dhrbimodulesquasilocalalgebras}, $\alpha$ acts by braided autoequivalences on $\text{DHR}$ by twisting. Given $X\in \text{DHR}(B)$, define

$X^{\alpha}:=X$ as a vector space with

$$a\triangleright x\triangleleft b:=\alpha^{-1}(a)x\alpha^{-1}(b)$$

$$\langle x\ |\ y\rangle^{\alpha}:=\alpha(\langle x\ |\ y\rangle).$$

\noindent At the level of morphisms, if $f:X\rightarrow Y$ is an intertwiner, then $\widetilde{\alpha}(f):=f$ as a set function, which is easily seen to be an intertwiner between the twisted bimodules.

By \cite{jones2024dhrbimodulesquasilocalalgebras}, this is again a DHR bimodule, and defines a braided autoequivalence of $\text{DHR}(B)$, which we denote $\widetilde{\alpha}$.

Now, let $\text{Ch}^{\alpha}_{B}(A_{1},A_{2})$ denote the cp maps $\Psi: A\rightarrow A$ with $\Psi|_{B}=\alpha$. We have the following result (c.f. \cite{jones2025quantumcellularautomatacategorical}).

\begin{theorem}\label{parameterizingduality}
    Let $B$ be a physical boundary algebra, and let $\alpha\in \text{QCA}(B)$. Suppose $B\subseteq A_{1}, B\subseteq A_{2}, B\subseteq A_{3}$ are topological boundaries, so that $A_{1}, A_{2}, A_{3}$ are Lagrangian algebras in $\text{DHR}(B)$. 
    
    \begin{enumerate}
    \item 
    Then there is a canonical (convex) isomorphism  $\lambda:\text{Ch}^{\alpha}_{B}(A_{1},A_{2})\cong \text{Ch}_{B}(\widetilde{\alpha}(A_{1}), A_{2})$. 
    \item 
    If $\Psi'\in \text{Ch}^{\alpha}_{B}(A_{1},A_{2})$ and $\Psi\in \text{Ch}^{\beta}_{B}(A_{2},A_{3})$, then $$\Psi\circ \Psi'=\lambda^{-1}((\lambda(\Psi)\circ \widetilde{\beta}(\lambda(\Psi')))\in \text{Ch}^{\beta\circ \alpha}_{B}(A_{1},A_{3}).$$
    
    \end{enumerate}
\end{theorem}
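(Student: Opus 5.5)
The plan is to build $\lambda$ by precomposing with a fixed isomorphism that converts the twisted action into an honest bimodule structure. Concretely, for $B\subseteq A_1$ form the twisted extension $\widetilde{\alpha}(A_1)$. This is the C$^\star$-algebra $A_1$ with $B$ embedded via $b\mapsto \alpha^{-1}(b)$, with conditional expectation $\alpha\circ E_1$. Equivalently, it is the Q-system obtained by applying the braided autoequivalence $\widetilde{\alpha}$ of $\text{DHR}(B)$ to the Q-system $A_1$. Since $\widetilde{\alpha}$ is a braided autoequivalence, $\widetilde{\alpha}(A_1)$ is again a Lagrangian Q-system, so $B\subseteq \widetilde{\alpha}(A_1)$ is a topological boundary extension. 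Given $\Psi\in \text{Ch}^{\alpha}_{B}(A_1,A_2)$, I set $\lambda(\Psi):=\Psi$ as a set map $\widetilde{\alpha}(A_1)\to A_2$. Then $\lambda(\Psi)$ restricted to the copy $\alpha^{-1}(B)\subseteq A_1$ sends $\alpha^{-1}(b)\mapsto \alpha(\alpha^{-1}(b))=b$, so it is the identity on $B$. Complete positivity is unchanged, because the underlying C$^\star$-algebra is unchanged. Conversely, any $\Phi\in\text{Ch}_B(\widetilde{\alpha}(A_1),A_2)$, read as a map on $A_1$, restricts to $\alpha$ on $B$. Hence $\lambda$ is a bijection, and it is clearly affine, hence a convex isomorphism.

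The step I expect to be the main obstacle is making sure this identification is canonical in $\text{DHR}(B)$. The twisted algebra must actually be identified with the object $\widetilde{\alpha}(A_1)$ as defined via $X^{\alpha}$, including the $B$-valued inner product $\alpha(E_1(x^*y))$. It must also be checked to be a local extension in the sense required, namely $\bigcup_I$-density and locality of $\alpha\circ E_1$. The density and locality follow from the bounded spread of $\alpha$, possibly after enlarging $R$. Lagrangianity follows from the twisting functor of \cite{jones2024dhrbimodulesquasilocalalgebras} preserving Q-system structure. One checks that the multiplication on $A_1$ is a bimodule map $A_1^{\alpha}\boxtimes_B A_1^{\alpha}\to A_1^{\alpha}$, and that the image under $\widetilde{\alpha}$ of the multiplication morphism is literally the same set map.

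For part 2, take $\Psi'\in\text{Ch}^{\alpha}_B(A_1,A_2)$ and $\Psi\in \text{Ch}^{\beta}_B(A_2,A_3)$. Clearly $\Psi\circ\Psi'$ restricts to $\beta\circ\alpha$ on $B$, so it lies in $\text{Ch}^{\beta\circ\alpha}_B(A_1,A_3)$. Next, note that $\widetilde{\beta}$ applied to a bimodule channel $\lambda(\Psi')\colon\widetilde{\alpha}(A_1)\to A_2$ is again the same set map. It is now viewed as a channel $\widetilde{\beta}\widetilde{\alpha}(A_1)\to\widetilde{\beta}(A_2)$, and it is a bimodule channel by the morphism-level definition of $\widetilde{\beta}$. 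I also need $\widetilde{\beta}\circ\widetilde{\alpha}=\widetilde{\beta\circ\alpha}$ on the nose at the level of these twisted algebras. This holds because twisting by $\alpha$ and then by $\beta$ embeds $B$ via $\alpha^{-1}\beta^{-1}=(\beta\alpha)^{-1}$. Then $\lambda(\Psi)\circ\widetilde{\beta}(\lambda(\Psi'))\colon \widetilde{\beta\alpha}(A_1)\to A_3$ is, as a set map, just $\Psi\circ\Psi'$. Applying $\lambda^{-1}$, which is again the identity on underlying maps, gives the formula. The only care needed is bookkeeping of which copy of $B$ each algebra carries.
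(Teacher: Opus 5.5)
Your proposal is correct and follows essentially the same route as the paper: $\lambda$ is the identity on underlying set maps. The paper checks $B$-bimodularity, $\Psi(\alpha^{-1}(b_1)a\alpha^{-1}(b_2))=b_1\Psi(a)b_2$, via the multiplicative domain, whereas you check the equivalent condition that the map restricts to the identity on the twisted copy $\alpha^{-1}(B)$; part 2 then holds trivially at the level of set maps, using $\widetilde{\beta}\circ\widetilde{\alpha}=\widetilde{\beta\circ\alpha}$. The extra points you verify are left implicit in the paper, and your checks of them are sound: that $\widetilde{\alpha}(A_1)$ is again a Lagrangian topological boundary extension, and that the twists compose on the nose.
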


\begin{proof}
Let $\Psi\in \text{Ch}^{\alpha}_{B}(A_{1},A_{2})$. Then considering $A_{1}$ as a DHR bimodule of $B$, we claim the map of vector spaces $\Psi: A_{1}\rightarrow A_{2}$ actually defines a $B$-$B$ bimodule map 
$\widetilde{\alpha}(A_{1})\rightarrow A_{2}$, which we will denote $\lambda(\Psi)$. We compute

$$\Psi(b_{1}\triangleright a\triangleleft b_{2})=\Psi(\alpha^{-1}(b_1)a\alpha^{-1}(b_2))=b_{1}\Psi(a)b_{2}.$$

Thus $\Psi\in \text{Ch}_{B}(\widetilde{\alpha}(A_{1}), A_{2})$. Conversely the same computation shows that for any $\Psi\in \text{Ch}_{B}(\widetilde{\alpha}(A_{1}), A_{2})$, the underlying map of vector spaces $\Psi: A_{1}=\widetilde{\alpha}(A_{1}) \rightarrow A_{2}$ \textit{is} in $\text{Ch}_{B}(\widetilde{\alpha}(A_{1}), A_{2})$. The statement concerning composition follows trivially.

\end{proof}

The above theorem can be represented graphically by asserting that we always have the decomposition

\[\text{Ch}^\alpha_B(A_1,A_2):\quad
\diagram{1.4}{
\draw (0,-.5)--(2.6,-.5);
\draw (0,1)--(2.6,1);
\fill[YaleLightGrey] (0,-.5) rectangle  (2.6,1);
\draw[YaleDarkGrey] (1.3,-.5)--(1.3,1);
\node at (.65,-.8) {$B$};
\node at (1.95,-.8) {$B$};
\node at (.65,1.3) {$A_1$};
\node at (1.95,1.3) {$A_2$};
\node at (1.3,-.8) {\small$\alpha$};
\fill(1.3,-.5) circle [radius = .05];
\fill(1.3,1) circle [radius = .05];
\node at (.3,-1.4){$\longrightarrow$};
\node at (.3,-1.6){\small$\text{time}$};
}\quad =\quad
\diagram{1.4}{
\draw (0,-.5)--(3.6,-.5);
\draw (0,1)--(3.6,1);
\fill[YaleLightGrey] (0,-.5) rectangle  (3.6,1);
\draw[YaleDarkGrey] (1.2,-.5)--(1.2,1);
\draw[trivial] (2.4,-.5)--(2.4,1);
\node at (.6,-.8) {$B$};
\node at (1.8,-.8) {$B$};
\node at (3,-.8) {$B$};
\node at (.6,1.3) {$A_1$};
\node at (1.8,1.3) {$\widetilde{\alpha}(A_1)$};
\node at (3,1.3) {$A_2$};
\node at (1.2,-.8) {\small$\alpha$};
\node at (2.4,-.8) {\small$\id_B$};
\fill(1.2,-.5) circle [radius = .05];
\fill(1.2,1) circle [radius = .05];
\fill(2.4,-.5) circle [radius = .05];
\fill(2.4,1) circle [radius = .05];
\node at (.3,-1.4){$\longrightarrow$};
\node at (.3,-1.6){\small$\text{time}$};
}\,.
\]

\noindent Theorem 1.1 from the introduction now follows.

\bigskip

\textit{Proof of Theorem 1.1}. For any $\alpha\in \text{QCA}(B)$, we have an autoequivalence $\widetilde{\alpha}\in \text{Aut}_{br}(\mathcal{Z}(\mathcal{C}))$. By \cite{etingof2009fusioncategorieshomotopytheory}, this defines an invertible $\mathcal{C}$-$\mathcal{C}$ bimodule category, which as a right $\mathcal{C}$ module category corresponds to $\widetilde{\alpha}^{-1}(I(\mathbb{1}))$. The simple objects of this bimodule category are in bijective correspondence with the minimal convolution idempotents of the algebra $H(I(\mathbb{1}), \alpha^{-1}(I(\mathbb{1}))\cong H(\alpha(I(\mathbb{1})), I(\mathbb{1}))$. Thus the result follows.


\subsection{Universal tensor categories and emanant symmetries}\label{sec:universal_cat}

We turn our attention to characterizing emanant symmetries of duality operators. We assume we have a physical boundary subalgebra $B\subseteq A$ for a fixed boundary, with symmetry category $\mathcal{C}$. We want to understand UV-categories of duality operators on $A$, which will flow to internal symmetries under any $\mathcal{C}$-constained RG. In general these categories will not be fusion, in particular if the restriction of $\Psi|_{B}\in \text{QCA}(B)$ does not have finite order. 

We have already characterized the duality operators that sit over a fixed QCA, $\alpha\in \text{QCA}(B)$, and we have a formula to compute how these compose, given also by the previous composition. If $\Psi'\in \text{Ch}^{\alpha}_{B}(A, A)$ and $\Psi\in \text{Ch}^{\beta}_{B}(A,A)$, then $\Psi'\circ \Psi\in \text{Ch}^{\alpha\circ \beta}_{B}(A,A)$, so it will decompose as a convex combination, and thus this looks like a kind of (normalized) fusion rule. 

We will see there is a universal unitary tensor category here, though it is not necessarily fusion.

\begin{theorem}\label{thm:UV-category}
Let $B\subseteq A$ be a physical boundary subalgebra, and let $\{\alpha_{j}\}^{n}_{j=1}\subseteq \text{QCA}(B)$. Let $\pi: \mathbb{F}_{n}\rightarrow \text{QCA}(B)$ be the homomorphsim of groups sending the jth generator to $\alpha_{j}$.

\begin{enumerate}
\item
There is a canonical $\mathbb{F}_{n}$-graded extension of $\mathcal{C}$, denoted $\mathcal{C}\#\mathbb{F}_{n}$. 
\item 
For each element $g\in \mathbb{F}_{n}$, there is a bijection $X\leftrightarrow\Phi_{X}$ between the simple objects $X\in \left(\mathcal{C}\#\mathbb{F}_{n}\right)_{g} $ of the $g$-graded component and the extreme points of the simplex $\text{Ch}^{\pi(g)}_{B}(A,A)$.
\item 
If $X\in \left(\mathcal{C}\#\mathbb{F}_{n}\right)_{g}$ and $Y\in \left(\mathcal{C}\#\mathbb{F}_{n}\right)_{h}$, then $\Phi_{X}\circ \Phi_{Y}:=\sum_{Z\in \left(\mathcal{C}\#\mathbb{F}_{n}\right)_{gh}} \frac{d_{Z}}{d_{X}d_{Y}}N^{Z}_{XY} \Phi_{Z}$.
\end{enumerate} 
\end{theorem}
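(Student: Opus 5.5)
The plan is to build $\mathcal{C}\#\mathbb{F}_{n}$ from the braided action of QCA on $\text{DHR}(B)$, using the Etingof--Nikshych--Ostrik correspondence between braided autoequivalences of $\mathcal{Z}(\mathcal{C})$ and invertible $\mathcal{C}$-$\mathcal{C}$ bimodule categories.

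\textbf{Step 1: the group homomorphism.} By \cite{jones2024dhrbimodulesquasilocalalgebras}, the twisting construction $\alpha\mapsto\widetilde{\alpha}$ gives a homomorphism $\text{QCA}(B)\rightarrow \text{Aut}_{br}(\text{DHR}(B))\cong\text{Aut}_{br}(\mathcal{Z}(\mathcal{C}))$. Composing with $\pi$ yields a homomorphism $\rho:\mathbb{F}_{n}\rightarrow \text{Aut}_{br}(\mathcal{Z}(\mathcal{C}))$.

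\textbf{Step 2: lifting to the Brauer--Picard 2-group and building the extension.} By \cite{etingof2009fusioncategorieshomotopytheory}, $\text{Aut}_{br}(\mathcal{Z}(\mathcal{C}))\cong \text{BrPic}(\mathcal{C})$. A $G$-graded extension of $\mathcal{C}$ is then classified by three pieces of data:
\begin{itemize}
\item a homomorphism $c:G\rightarrow\text{BrPic}(\mathcal{C})$;
\item a class in $H^{2}(G,\text{Inv}(\mathcal{Z}(\mathcal{C})))$, subject to the vanishing of an obstruction in $H^{3}(G,\text{Inv}(\mathcal{Z}(\mathcal{C})))$;
\item a class in $H^{3}(G,\mathbb{C}^{\times})$, subject to the vanishing of an obstruction in $H^{4}(G,\mathbb{C}^{\times})$.
\end{itemize}
For $G=\mathbb{F}_{n}$, which has cohomological dimension one, all these cohomology groups in degrees at least $2$ vanish. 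Hence the extension exists and is unique up to equivalence, and this is the precise sense of ``canonical''.

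Concretely, I would set $(\mathcal{C}\#\mathbb{F}_{n})_{g}:=\mathcal{M}_{\rho(g)}$, the invertible bimodule category attached to $\widetilde{\pi(g)}$ as in the proof of Theorem 1.1. As a right $\mathcal{C}$-module, $\mathcal{M}_{\rho(g)}\simeq \text{Fun}_{\mathcal{C}}(\mathcal{M}_{\widetilde{\pi(g)}(A)},\mathcal{M}_{A})$. Tensor products are given by $\mathcal{M}_{g}\boxtimes_{\mathcal{C}}\mathcal{M}_{h}\simeq\mathcal{M}_{gh}$, with the associativity data supplied by the unique extension above. The result is not fusion when $\pi(\mathbb{F}_n)$ is infinite, since there are then infinitely many graded pieces, but each piece is finite semisimple.

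\textbf{Step 3: parts 2 and 3.} For part 2, Theorem \ref{parameterizingduality}(1) identifies $\text{Ch}^{\pi(g)}_{B}(A,A)$ with $\text{Ch}_{B}(\widetilde{\pi(g)}(A),A)$. The earlier Proposition on bimodule channels then gives a bijection between the extreme points of this simplex and the simple $\mathcal{C}$-module functors $\mathcal{M}_{\widetilde{\pi(g)}(A)}\rightarrow\mathcal{M}_{A}$, which are exactly the simple objects of $(\mathcal{C}\#\mathbb{F}_{n})_{g}$.

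For part 3, Theorem \ref{parameterizingduality}(2) writes $\Phi_{X}\circ\Phi_{Y}$ as $\lambda^{-1}$ of $\lambda(\Phi_{X})\circ\widetilde{\pi(g)}(\lambda(\Phi_{Y}))$. Transporting $\lambda(\Phi_{Y})$ along $\widetilde{\pi(g)}$ sends the extreme channel for a functor $Y$ to the extreme channel for the conjugated functor. This should correspond to the tensor product in the extension, so that we compose module functors
\[
\mathcal{M}_{\widetilde{gh}(A)}\rightarrow\mathcal{M}_{\widetilde{g}(A)}\rightarrow\mathcal{M}_{A}.
\]
The composition formula in the earlier Proposition (part 2) then gives the stated coefficients $\frac{d_Z}{d_Xd_Y}N^{Z}_{XY}$.

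\textbf{Main obstacle.} I expect the hard step to be showing that the twist $\widetilde{\pi(g)}$ acting on channels matches the bimodule tensor product $\boxtimes_{\mathcal{C}}$ in the extension, with dimensions preserved. I would first check that $\widetilde{\alpha}$ is a unitary braided autoequivalence, so that it preserves dimensions and carries the basis $\Phi_{Y}$ of \eqref{eYdefinition} to the corresponding basis for the twisted Lagrangian algebras. I would then identify $\text{Fun}_{\mathcal{C}}$-composition with the multifusion $\widetilde{\mathcal{D}}$ structure, where the fusion-rule formula is already established.
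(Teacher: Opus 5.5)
Your proposal is correct and follows essentially the same route as the paper: ENO extension theory applied to the twisting action $\mathbb{F}_n\to\text{Aut}_{br}(\mathcal{Z}(\mathcal{C}))$, with every obstruction and torsor vanishing because $\mathbb{F}_n$ has cohomological dimension one, and then Theorem \ref{parameterizingduality} for parts 2 and 3. The only differences are these: the paper reads off a 2-group homomorphism directly from the strict conjugation action of $\text{QCA}(B)$ on bimodules, so it only cites the $H^{4}$ and $H^{3}$ layers with $\text{U}(1)$ coefficients, whereas you lift a bare group homomorphism and also dispose of the $\text{Inv}(\mathcal{Z}(\mathcal{C}))$ layer; and the compatibility of the twist with the tensor product, which you flag as the main obstacle, is exactly the step the paper treats as immediate from part 2 of Theorem \ref{parameterizingduality}.
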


\begin{proof}
We note that since $\text{QCA}(B)\le \text{Aut}(B)$, the action on bimodules is just by conjugation, and the action of QCA on $\text{DHR}$ bimodules is just the restriction of this general action, we get a 2-group homomorphism $\underline{\text{QCA}(B)}\rightarrow \underline{\text{Aut}}_{br}(\mathcal{Z}(\mathcal{C}))$. Composing $\pi$ yields a 2-group homomorphism $\widetilde{\pi}:\underline{\mathbb{F}_{n}}\rightarrow \underline{\text{Aut}}_{br}(\mathcal{Z}(\mathcal{C}))$. By the standard extension theory of fusion categories \cite{etingof2009fusioncategorieshomotopytheory}, the existence of an $\mathbb{F}_{n}$-graded extension is obstructed by a class $o_{4}(\widetilde{\pi})\in \text{H}^{4}(\mathbb{F}_{n}, \text{U}(1))$, and if this vanishes, the solutions to the pentagon equation for the associator form a torsor over $H^{3}(\mathbb{F}_{n}, \text{U}(1))$. Both of these cohomology groups are trivial for free groups, and thus there exists a unique $\mathbb{F}_{n}$-graded extension. We will call this category $\mathcal{C}\# \mathbb{F}_{n}$

Now, given an $\alpha\in \text{QCA}(A,A)$, we have an isomorphism $\text{Ch}^{\alpha}_{B}(A,A)\cong \text{Ch}_{B}(\alpha(A),A)$, and by \cite[Section 3.2]{BJ21} the extreme points correspond to the simple objects in $\alpha$ graded component (or more precisely, the $g$ graded component for any $g\in \pi^{-1}(\alpha)$. Then the claim about composition follows from part 2 of Theorem \ref{parameterizingduality}.

\end{proof}

This gives us Theorem \ref{Theorem:UniversalCat}

\bigskip

\textit{Proof of Corollary \ref{Cor:WeaklyIntegral}}.

Any category $\cX$ obtain under RG from an emanant symmetry will necessarily be a quotient of $\mathcal{C}\# \mathbb{F}_n$, and thus will be a G-graded quotient of a quotient of $\mathcal{C}$, where $G$ is some quotient of $\mathbb{F}_{n}$. But by \cite{evans2025operatoralgebraicapproachfusion}, $\mathcal{C}$ must be integral, hence any quotient of $\mathcal{C}$ is integral \cite[Lemma 3.5.6]{etingof2015tensor}. Thus $\cX$ is a $G$-graded extension of an integral category, which is neccessarily weakly integral. Indeed, if $X\in \cX$ is a simple object, then $d(X)^{2}=d(X\otimes X^{*})$, but $X\otimes X^{*}$ is in the quotient of $\mathcal{C}$, which is integral.

\subsubsection{Example: Tambara-Yamagami type duality operators}
The simpliest example is $\cC:=\Hilb_{A}$, for $A$ being a finite abelian group, which has been studied widely in ZX calculus \cite{Tantivasadakarn_2023,Tantivasadakarn_2024,gorantla2024tensornetworksnoninvertiblesymmetries,lu2025strangecorrelatorstringorder} and tensor network \cite{Lootens_2023,Lootens_2024}. We revisit this class of examples to point out the precise UV-category being a $\Z$-graded extension of $A$ and extra intertwiners in computing the $F$-symbols.

As discussed in \cite{jones2025quantumcellularautomatacategorical}, given a symmetric non-degenerate bicharacter $\chi:A \times A \rightarrow \text{U}(1)$, it induces an isomorphsim 
\[
\widetilde{\chi}:A \rightarrow \hat{A}, \quad \widetilde{\chi}(a)(x) = \chi(a,x)\,.
\]
The objects in the center $\cZ(\cC) \cong \Hilb_{A \times \hat{A}}$ are labeled by pairs $(a,\phi)$ for $a \in A, \phi \in \hat{A}$ giving the half-braiding $\sigma_{x}:=\phi(x)\,\id_{ax}$. The braided autoequivalence $\widetilde{\alpha}_\chi \in \Aut_{br}(\Hilb_{A \times \hat{A}})$ is given by
\[
\widetilde{\alpha}_\chi(a,\phi):=\left(\widetilde{\chi}^{-1}(\phi),\widetilde{\chi}(a)\right)\,.
\]
This gives the $\Z_2$-graded extension of $A$, i.e. the well-known Tambara-Yamagami category \cite{TAMBARA1998692} with simple objects being $A \cup \{m\}$.

$X := \bigoplus_{a \in A}a$ is a strongly tensor generator of $\cC$, then $\DHR(\alpha_\chi) \cong \widetilde{\alpha}_\chi$ gives a QCA acting on the symmetric subalgebra by swapping the two canonical Lagrangian algebras $L_1=\C[ A] \times \one$ and $L_2=\one \times \C[\hat{A}]$. 

When $A =\Z_2$, such a symmetric non-degenerate bicharacter is unique. We diagrammatically represent the duality operator as \cite{lu2025strangecorrelatorstringorder,inamura2026remarksnoninvertiblesymmetriestensor}
\begin{align*}
    & m = \diagram{1}{
    \draw[string](-1,0)--(0,0);
    \draw[string](0,0)--(1,0);
    \draw[string](0,0)--(0,1);
    \filldraw[fill=white](0,0) circle [radius=0.1];
    } \;, \quad \quad m^\dagger =\diagram{1}{
    \draw[string](-1,0)--(0,0);
    \draw[string](0,0)--(1,0);
    \draw[string](0,-1)--(0,0);
    \filldraw[fill=black](0,0) circle [radius=0.1];
    }\;, \quad \quad \sH = \diagram{1}{
    \draw[string](-1,0)--(0,0);
    \draw[string](0,0)--(1,0);
    \node[draw = black, fill = white] at (0,0){} ;
    }= \frac{1}{\sqrt{2}}\; \chi(a,b) \\[1.5 ex]  
    & \sD_{-} = \diagram{1}{
    \draw[string](0,0)--(.7,0);
    \draw[string](-.7,0)--(0,0);
    \draw[string](.7,0)--(1.4,0);
    \draw[string](0,-.7)--(0,0);
    \draw[string](1.4,0)--(2.1,0);
    \draw[string](2.1,0)--(2.8,0);
    \draw[string](1.4,0)--(1.4,.7);
    \filldraw[fill=black](0,0) circle [radius=0.1];
    \filldraw[fill=white](1.4,0) circle [radius=0.1];
    \node[draw = black, fill = white] at (.7,0){};
    \node[draw = black, fill = white] at (2.1,0){};
    }\;, \quad \quad \sD_{+} = \diagram{1}{
    \draw[string](0,0)--(.7,0);
    \draw[string](-.7,0)--(0,0);
    \draw[string](.7,0)--(1.4,0);
    \draw[string](1.4,0)--(2.1,0);
    \draw[string](2.1,0)--(2.8,0);
    \draw[string](0,0)--(0,.7);
    \draw[string](1.4,-.7)--(1.4,0);
    \filldraw[fill=white](0,0) circle [radius=0.1];
    \filldraw[fill=black](1.4,0) circle [radius=0.1];
    \node[draw = black, fill = white] at (.7,0){};
    \node[draw = black, fill = white] at (2.1,0){};
    }\,.
\end{align*}

$\alpha_\chi $ has infinite order in $\text{QCA}(B)$. By Theorem \ref{thm:UV-category}, the $\Z$-graded extension of $\Z_2$ exsits and is defined as
\begin{alignat*}{2}
	&\mathrm{Irr}(\cC_0) = \{1,\eta\}, &&\\
	&\mathrm{Irr}(\cC_1) = \{\sD_{+}\},&& \quad \mathrm{Irr}(\cC_{-1}) = \{\sD_{-}\},\\
	&\mathrm{Irr}(\cC_2) = \{T^+, \eta T^+\},&&\quad \mathrm{Irr}(\cC_{-2}) = \{T^-,\eta T^-\},\\
	&\mathrm{Irr}(\cC_3) = \{\sD_{+}T^+\},&& \quad \mathrm{Irr}(\cC_{-3}) = \{\sD_{-}T^-\},\\
	&\mathrm{Irr}(\cC_4) = \{T^+ T^+, \eta T^+ T^+\},&&\quad \mathrm{Irr}(\cC_{-4}) = \{T^- T^-,\eta T^- T^-\}, \quad \cdots\\		
\end{alignat*}
where the objects are defined as \footnote{We use the convention in \cite{Seiberg_2024}.}
\begin{alignat*}{2}
&\diagram{1}{
\draw (0,-1) -- (0,1);
\draw (1,0) -- (-1,0);
\node[draw = black, fill = white] at (0,0) {$T^+$};
} := \diagram{1}{
\draw[string](-1,0)--(-.3,0);
\draw[string](.3,0)--(1,0);
\draw[string](0,-1)--(0,-.3);
\draw[string](0,.3)--(0,1);
\draw(-1,0)--(-.3,0)--(0,.3)--(0,1);
\draw(0,-1)--(0,-.3)--(.3,0)--(1,0);
} && \qquad \diagram{1}{
\draw (0,-1) -- (0,1);
\draw (1,0) -- (-1,0);
\node[draw = black, fill = white] at (0,0) {$T^-$};
} := \diagram{1}{
\draw[string](-1,0)--(-.3,0);
\draw[string](.3,0)--(1,0);
\draw[string](0,-1)--(0,-.3);
\draw[string](0,.3)--(0,1);
\draw(1,0)--(.3,0)--(0,.3)--(0,1);
\draw(0,-1)--(0,-.3)--(-.3,0)--(-1,0);
} \\[1.5 ex]
&\diagram{1}{
\draw (0,-1) -- (0,1);
\draw (1,0) -- (-1,0);
\node[draw = black, fill = white] at (0,0) {$ \eta T^+$};
} := \diagram{1}{
\draw[string](-1,0)--(-.3,0);
\draw[string](.3,0)--(1,0);
\draw[string](0,-1)--(0,-.3);
\draw[string](0,.3)--(0,1.5);
\draw(-1,0)--(-.3,0)--(0,.3)--(0,1);
\draw(0,-1)--(0,-.3)--(.3,0)--(1,0);
\node[draw = black, fill = white] at (0,.8) {$X$};
} && \qquad \diagram{1}{
\draw (0,-1) -- (0,1);
\draw (1,0) -- (-1,0);
\node[draw = black, fill = white] at (0,0) {$\eta T^-$};
} := \diagram{1}{
\draw[string](-1,0)--(-.3,0);
\draw[string](.3,0)--(1,0);
\draw[string](0,-1)--(0,-.3);
\draw[string](0,.3)--(0,1.5);
\draw(1,0)--(.3,0)--(0,.3)--(0,1);
\draw(0,-1)--(0,-.3)--(-.3,0)--(-1,0);
\node[draw = black, fill = white] at (0,.8) {$X$};
} \\[1.5 ex]
& \diagram{1}{
\draw (0,-1) -- (0,1);
\draw (1,0) -- (-1,0);
\node[draw = black, fill = white] at (0,0) {$\sD_{+}$};
}
:= \diagram{1}{
    \draw[string](0,0)--(.7,0);
    \draw[string](-.7,0)--(0,0);
    \draw[string](.7,0)--(1.4,0);
    \draw[string](1.4,0)--(2.1,0);
    \draw[string](2.1,0)--(2.8,0);
    \draw[string](0,0)--(0,.7);
    \draw[string](1.4,-.7)--(1.4,0);
    \filldraw[fill=white](0,0) circle [radius=0.1];
    \filldraw[fill=black](1.4,0) circle [radius=0.1];
    \node[draw = black, fill = white] at (.7,0){};
    \node[draw = black, fill = white] at (2.1,0){};
    }
&& \qquad \diagram{1}{
\draw (0,-1) -- (0,1);
\draw (1,0) -- (-1,0);
\node[draw = black, fill = white] at (0,0) {$\sD_{-}$};
} :=
\diagram{1}{
    \draw[string](0,0)--(.7,0);
    \draw[string](-.7,0)--(0,0);
    \draw[string](.7,0)--(1.4,0);
    \draw[string](0,-.7)--(0,0);
    \draw[string](1.4,0)--(2.1,0);
    \draw[string](2.1,0)--(2.8,0);
    \draw[string](1.4,0)--(1.4,.7);
    \filldraw[fill=black](0,0) circle [radius=0.1];
    \filldraw[fill=white](1.4,0) circle [radius=0.1];
    \node[draw = black, fill = white] at (.7,0){};
    \node[draw = black, fill = white] at (2.1,0){};
    }\\[1.5 ex]
& \diagram{1}{
\draw (0,-1) -- (0,1);
\draw (1,0) -- (-1,0);
\node[draw = black, fill = white] at (0,0) {$\sD_{+}T^+$};
}
:= \diagram{1}{
    \draw[string](0,0)--(.7,0);
    \draw[string](-.7,0)--(0,0);
    \draw[string](.7,0)--(1.4,0);
    \draw[string](1.4,0)--(2.1,0);
    \draw[string](2.1,0)--(2.8,0);
    \draw[string](0,0)--(0,.7);
    \draw[string](1.4,-.7)--(1.4,0);
    \draw(1.1,-1)--(1.4,-.7);
    \draw[string](-.7,-1)--(1.1,-1);
    \draw[string](1.4,-2)--(1.4,-1.3);
    \draw(1.4,-1.3)--(1.7,-1);
    \draw[string](1.7,-1)--(2.8,-1);
    \filldraw[fill=white](0,0) circle [radius=0.1];
    \filldraw[fill=black](1.4,0) circle [radius=0.1];
    \node[draw = black, fill = white] at (.7,0){};
    \node[draw = black, fill = white] at (2.1,0){};
    }
&& \qquad \diagram{1}{
\draw (0,-1) -- (0,1);
\draw (1,0) -- (-1,0);
\node[draw = black, fill = white] at (0,0) {$\sD_{-}T^-$};
} :=
\diagram{1}{
    \draw[string](0,0)--(.7,0);
    \draw[string](-.7,0)--(0,0);
    \draw[string](.7,0)--(1.4,0);
    \draw[string](0,-.7)--(0,0);
    \draw[string](1.4,0)--(2.1,0);
    \draw[string](2.1,0)--(2.8,0);
    \draw[string](1.4,0)--(1.4,.7);
    \draw(0,-.7)--(.3,-1);
    \draw[string](.3,-1)--(2.8,-1);
    \draw[string](0,-2)--(0,-1.3);
    \draw(-.3,-1)--(0,-1.3);
    \draw[string](-.7,-1)--(-.3,-1);
    \filldraw[fill=black](0,0) circle [radius=0.1];
    \filldraw[fill=white](1.4,0) circle [radius=0.1];
    \node[draw = black, fill = white] at (.7,0){};
    \node[draw = black, fill = white] at (2.1,0){};
    }\\[1.5 ex]
&\diagram{1}{
\draw (0,-1) -- (0,1);
\draw (1,0) -- (-1,0);
\node[draw = black, fill = white] at (0,0) {$(T^+)^2$};
} := \diagram{1}{
\draw(-1,-.5)--(-.3,-.5)--(0,-.2)--(0,.5)--(.3,.8)--(1,.8);
\draw(0,-1.5)--(0,-.8)--(.3,-.5)--(1,-.5);
\draw(-1,.8)--(-.3,.8)--(0,1.1)--(0,1.8);
\draw[string](0,-1.5)--(0,-.8);
\draw[string](0,-.2)--(0,.5);
\draw[string](0,1.1)--(0,1.8);
\draw[string](-1,-.5)--(-.3,-.5);
\draw[string](.3,.8)--(1,.8);
\draw[string](.3,-.5)--(1,-.5);
\draw[string](-1,.8)--(-.3,.8);
} 
&& \qquad \diagram{1}{
\draw (0,-1) -- (0,1);
\draw (1,0) -- (-1,0);
\node[draw = black, fill = white] at (0,0) {$(T^-)^2$};
} := \diagram{1}{
\draw(1,-.5)--(.3,-.5)--(0,-.2)--(0,.5)--(-.3,.8)--(-1,.8);
\draw(0,-1.5)--(0,-.8)--(-.3,-.5)--(-1,-.5);
\draw(1,.8)--(.3,.8)--(0,1.1)--(0,1.8);
\draw[string](0,-1.5)--(0,-.8);
\draw[string](0,-.2)--(0,.5);
\draw[string](0,1.1)--(0,1.8);
\draw[string](-1,-.5)--(-.3,-.5);
\draw[string](.3,.8)--(1,.8);
\draw[string](.3,-.5)--(1,-.5);
\draw[string](-1,.8)--(-.3,.8);
} \\[1.5 ex]
&\cdots &&
\end{alignat*}

We used the MPO's defined above to compute $F$-symbols and find that any extra $F$-symbols involving translation are trivial.  We started with determining the intertwiners $\iota \rho_{a} = \rho_{b \ot c} \,\iota$ for each $b \ot c = a$, where $a,b,c$ are simple objects in $\cC^\times_{\Z}$ \footnote{An intertwiner is a rank-3 tensor to reduce the composed local tensors.}. Note that as $\eta T^\bullet$ is defined as an object, we take the intertwiner $\eta T^\bullet \rightarrow T^\bullet \ot \eta$ into account:
\begin{equation}\label{diag:T_eta}
	\begin{split}
		&T^+ \ot \eta =\; \diagram{1}{
\draw(1.2,0)--(.3,0)--(0,-.3)--(0,-1.7);
\draw(0,1)--(0,.3)--(-.3,0)--(-1.2,0);
\draw[YaleMidBlue](1.2,0)--(1.5,0);
\draw[YaleMidBlue](-1.2,0)--(-1.5,0);
\node[draw = black, fill = white] at (0,-1) {$X$};
\node[draw = YaleMidBlue, fill = white, text = YaleMidBlue] at (0.9,0) {$X$};
\node[draw = YaleMidBlue, fill = white, text = YaleMidBlue] at (-0.9,0) {$X$};
} = \diagram{1}{
\draw(1.2,0)--(.3,0)--(0,-.3)--(0,-1);
\draw(0,1.7)--(0,.3)--(-.3,0)--(-1.2,0);
\node[draw = black, fill = white] at (0,1) {$X$};} = \eta T^+  \qquad \text{Intertwiner} \quad \diagram{1}{
\draw(-1,0)--(1,0);
\node[draw = black, fill = white] at (0,0) {$X$};
}\,;\\[1.5 ex]
&T^- \ot \eta =\; \diagram{1}{
\draw(1.2,0)--(.3,0)--(0,.3)--(0,1);
\draw(0,-1.7)--(0,-.3)--(-.3,0)--(-1.2,0);
\draw[YaleMidBlue](1.2,0)--(1.5,0);
\draw[YaleMidBlue](-1.2,0)--(-1.5,0);
\node[draw = black, fill = white] at (0,-1) {$X$};
\node[draw = YaleMidBlue, fill = white, text = YaleMidBlue] at (0.9,0) {$X$};
\node[draw = YaleMidBlue, fill = white, text = YaleMidBlue] at (-0.9,0) {$X$};
} = \diagram{1}{
\draw(1.2,0)--(.3,0)--(0,.3)--(0,1.7);
\draw(0,-1)--(0,-.3)--(-.3,0)--(-1.3,0);
\node[draw = black, fill = white] at (0,1) {$X$};} = \eta T^- \qquad \text{Intertwiner} \quad \diagram{1}{
\draw(-1,0)--(1,0);
\node[draw = black, fill = white] at (0,0) {$X$};
}\,.
	\end{split}
\end{equation}

To find a self-dual model, we can start with the trivial symmetric algebra $\Fun(e)$, the corresponding symmetric local operator is given by $(\iota\iota^\dagger)_i$ for $\iota: \Fun(e) \hookrightarrow \Fun(A)$. Then we act $\alpha_\chi \in \text{QCA}(B)$ (duality operator) on $\iota\iota^\dagger$, and have
\[
H=-\sum_{i\in \Z} \alpha_{\chi}(\iota\iota^\dagger)_i-\sum_{i \in \Z}(\iota\iota^\dagger)_i\,.
\]

When $A = \Z_n$, the Hamiltonian is
        \[
        H = -\sum_{i}\frac{1}{2}(1+Z^{(n)}_{i}Z^{(n)}_{i+1}) - \sum_{i}\frac{1}{2}(1+X^{(n)}_i)\,.
        \]

When $A = \Z_2 \times \Z_2$, $Q \in\Rep(\Z_2\times \Z_2)$, one self-dual Hamiltonian is 
        \[
        H = -\sum_{i \in 2\Z} \left( Z_{i-1} X_{i} Z_{i+1} + Z_{i} X_{i+1} Z_{i+2} \right) \,.
        \]
        The duality channels assembly into $\Z_2$-graded extension of $\Z_2 \times \Z_2$, i.e. $\TY_{\Z_2 \times \Z_2}^{\chi,\epsilon}$ isomorphic to $\Rep^\dagger(D_8)$, $\Rep^\dagger(Q_8)$ and $\Rep^\dagger(\sH_8)$ for different $(\chi,\epsilon)$'s. Note that $\TY_{\Z_2 \times \Z_2}^{\chi_{diag},\epsilon=-1}$ is isomorphic to an anomalous integral fusion category. The UV categorical structure of duality channels is determined by the order of QCA acting on $\Z_2 \times \Z_2$-symmetric subalgebra in the group QCA$(B)$. For a single duality, the general situation gives a $\Z$-graded extension of $\Z_2 \times \Z_2$, while a finite $\Z_n$-graded quotient can happen on the lattice if the QCA has finite order.

\bibliography{refs}
\bibliographystyle{alpha}
\end{document}